\newcommand{\bbar}{\begin{pmatrix}}
\newcommand{\ebar}{\end{pmatrix}}
\newcommand{\uu}{{\Lambda G_\sigma}}
\newcommand{\bdm}{\begin{displaymath}}
\newcommand{\edm}{\end{displaymath}}
\newcommand{\beq}{\begin{equation}}
\newcommand{\beqa}{\begin{eqnarray}}
\newcommand{\beqas}{\begin{eqnarray*}}
\newcommand{\eeq}{\end{equation}}
\newcommand{\eeqa}{\end{eqnarray}}
\newcommand{\eeqas}{\end{eqnarray*}}
\newcommand{\dd}{\textup{d}}
\newcommand{\Ad}{\textup{Ad}}
\newcommand{\Ord}{\textup{Ord}}
\newcommand{\E}{{\mathbb E}}
\newcommand{\C}{{\mathbb C}}
\newcommand{\real}{{\mathbb R}}
\newcommand{\SSS}{{\mathbb S}}
\newcommand{\sym}{\mathcal{S}}
\newcommand{\ustar}{\Lambda_P^+ G^\C_\sigma}
\newcommand{\uc}{\Lambda G^\C_\sigma}
   \newtheorem{theorem}{Theorem}[section]
   \newtheorem{proposition}[theorem]{Proposition}
   \newtheorem{lemma}[theorem]{Lemma}
   \newtheorem{defn}[theorem]{Definition}
 \theoremstyle{remark}
   \newtheorem{example}[theorem]{Example}
   \newtheorem{remark}[theorem]{Remark}
\numberwithin{equation}{section}
\begin{document}

\title[Deformations of CMC surfaces]{Deformations of constant mean curvature surfaces preserving symmetries and the Hopf differential}

\author{David Brander}
\address{
Institut for Matematik og Computer Science,
Matematiktorvet, bygning 303B,
Technical University of Denmark}
\email{D.Brander@mat.dtu.dk}

\author{Josef F. Dorfmeister}
\address{TU M\"unchen\\ Zentrum Mathematik (M8), Boltzmannstr. 3\\
  85748, Garching\\ Germany}
\email{dorfm@ma.tum.de}

\thanks{Research partially supported by FNU grant \emph{Symmetry Techniques in Differential Geometry}}

\begin{abstract}
We define certain deformations between minimal and non-minimal constant mean curvature (CMC) surfaces in Euclidean space $\E^3$ which preserve the Hopf differential. We prove that, given a CMC $H$ surface $f$, either minimal or not, and a fixed basepoint $z_0$ on this surface, there is a naturally defined family $f_h$, for all $h \in \real$, of CMC $h$ surfaces that are tangent to $f$ at $z_0$, and which have the same Hopf differential.  Given the classical Weierstrass data for a minimal surface, we give an explicit formula for the generalized Weierstrass data for the non-minimal surfaces $f_h$, and vice versa. As an application, we use this to give a well-defined dressing action
on the class of minimal surfaces.  In addition, we show that symmetries of certain types associated with the basepoint are preserved under the deformation, and this gives a canonical choice of basepoint for surfaces with symmetries. We use this to define new examples of non-minimal CMC surfaces naturally associated to known minimal surfaces with symmetries.
\end{abstract}

\keywords{Constant mean curvature, minimal surfaces, Weierstrass representation, loop groups, integrable systems}

\subjclass[2010]{Primary 53A10; Secondary 58D10}

\maketitle

\section{Introduction}
Let $\Sigma \subset \C$ be a simply connected domain.  The classical Weierstrass representation for minimal surfaces states that given a pair $(\dd \omega , \, \nu)$,
where $\dd \omega = \mu (z) \dd z$ is a holomorphic $1$-form and $\nu$ a meromorphic function on
$\Sigma$,   and appropriate orders of vanishing, then
the formula
\bdm
f = 2 \Re \int_{z_0}^z f_z \, \dd z, \quad \quad
f_z \, \dd z=  \left((1-\nu^2)e_1 -  i  (1+\nu^2) \, e_2 -  2  \nu \, e_3\right)\, \dd \omega,
\edm
gives a minimal surface $f: \Sigma \to \real^3$. Conversely, all minimal immersions of $\Sigma$ can be obtained this way. This representation is one of the major tools in the study of minimal surface theory.  \\

For non-minimal constant mean curvature (CMC) surfaces, an infinite dimensional analogue to the Weierstrass representation was given in the 1990's by Dorfmeister, Pedit and Wu in \cite{DorPW}.  If we take the above $1$-form $f_z \, \dd z$ as the Weierstrass data for a minimal surface, then the analogous coordinate-independent data for a non-minimal surface is the \emph{normalized potential} $\hat \eta$, which
can be expressed in local coordinates as
\beq   \label{standardpotential}
\hat \eta = \bbar  0 & -\frac{H}{2} a(z) \\ \frac{Q(z)}{a(z)} & 0 \ebar \lambda^{-1} \dd z,
\eeq
where $Q$ is a holomorphic function, and $a$ is meromorphic (with appropriate orders of vanishing), and $\lambda$ is an  $\SSS^1$-parameter. The holomorphic bilinear form $Q \, \dd z^2$ is called the Hopf differential and is well-defined independent of coordinates.  The surface is obtained by integrating $\hat \eta$, performing a loop group decomposition, and applying the Sym-Bobenko formula, a simple formula involving the factor $1/H$.  The loop group decomposition is non-trivial to 
write down explicitly in practice, which means that it is more difficult to use this representation to construct or study CMC surfaces when compared with minimal surfaces. \\

The $1$-forms $f_z \dd z$ and $\hat \eta$  are unique given a choice of  basepoint $z_0$ on $\Sigma$.   Although one cannot substitute $H=0$ directly into the Sym-Bobenko formula,
it is a plausible guess that taking the limit
as $H$ tends to zero in the potential (\ref{standardpotential}) might lead to a minimal
surface.   The main result of this article is more useful than that, because it includes the converse:

\begin{theorem}   \label{mainthm}
Let $\Sigma \subset \C$ be a contractible domain.
\begin{enumerate}
\item  \label{mainthmitem1}
Let $f: \Sigma \to \E^3$ be a conformally immersed minimal surface, 
and a basepoint $z_0 \in \Sigma$ fixed.
  Then there is a canonical family of 
conformally immersed CMC $h$ surfaces $f_h: \Sigma \to \E^3$, 
all with the same Hopf differential as $f$, such that $f=f_0$ and
all of the maps $f_h$, together with their tangent planes, agree 
at the point $z_0$.  The family depends real analytically on $h$.
If $(\mu \dd z , \nu)$  are the
classical Weierstrass data for $f$, with coordinates chosen so that $\mu(z_0)=1$ and $\nu(z_0)=0$, then
 the normalized potential  for $f_h$  is 
\beq   \label{munupotential}
\hat \eta_h = \bbar 0 & -h \mu  \\ -\nu_z & 0 \ebar \lambda^{-1} \dd z.
\eeq

\item  \label{mainthmitem2}
Conversely, let $H$ be any non-zero real number and 
$f_H: \Sigma \to \E^3$ be a CMC $H$ immersion.  For a given basepoint $z_0$
 let 
\beq  \label{Hpotential}
\hat \eta_H = \bbar 0 & -\frac{H}{2}a \\ \frac{Q}{a} & 0 \ebar \lambda^{-1} \dd z,
\eeq
be the associated  normalized potential.
For any meromorphic function $g$, let $\Ord(g(z))$ denote the order of vanishing of $g$ at 
the point $z$.
Let $\Sigma^*$ be the open dense subset of $\Sigma$ on which the
following conditions are satisfied:
\begin{enumerate}
\item \label{conda} the function $a$ is holomorphic;
\item \label{condb}  at any zero of $a$ we have $\Ord(Q) = (\Ord(a)-2)/2$.
\end{enumerate}
Then the surface $f_H\big|_{\Sigma^*}: \Sigma^*\to \real$ is part of a family $f_h: \Sigma^* \to \E^3$, of
CMC immersions, for all $h \in \real$, and the normalized potential for $f_h$ is given by substituting
$h$ for $H$ in (\ref{Hpotential}). 
If  coordinates are chosen such that 
$a(z_0)$ is a real number, then 
the minimal surface $f_0: \Sigma^* \to \E^3$
has the classical Weierstrass data 
\bdm
\mu (z) \dd z = \frac{a(z)}{2}\dd z, \quad \quad \nu = -\int_{z_0}^z \frac{ Q(\tau)}{ a(\tau)} \dd \tau.
\edm
\end{enumerate}
\end{theorem}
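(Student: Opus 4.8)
The plan is to deduce everything from the behaviour, as $h\to 0$, of the DPW recipe applied to the family $\hat\eta_h$ of (\ref{munupotential}). Granting Part~(\ref{mainthmitem1}), Part~(\ref{mainthmitem2}) is a matter of identification: comparing (\ref{Hpotential}) with (\ref{munupotential}) forces $a=2\mu$ and $Q=-a\nu_z=-2\mu\nu_z$, so the candidate data is $(\mu\,\dd z,\nu)=(\tfrac12 a\,\dd z,\,-\int_{z_0}^z Q/a)$. Condition~(\ref{conda}) is exactly ``$\mu$ holomorphic'', and an order count at a zero of $a$ shows condition~(\ref{condb}) is equivalent to $\nu$ having there a pole of order $\tfrac12\Ord(a)$, which is the classical branch-point condition making this a valid Weierstrass pair; the normalization in which $a(z_0)$ is real is the analogue here of $\mu(z_0)=1$, $\nu(z_0)=0$ in Part~(\ref{mainthmitem1}), up to a biholomorphic change of the domain coordinate and an isometry of $\E^3$. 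So I concentrate on Part~(\ref{mainthmitem1}).

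For $h\neq 0$ there is no new content: running the recipe on $\hat\eta_h$ with $C_h(z_0)=I$ --- solve $\dd C_h=C_h\hat\eta_h$, Iwasawa-split $C_h=F_hB_h$ with $F_h\in\Lambda SU(2)_\sigma$ and $B_h\in\uhat$, and apply the Sym--Bobenko formula normalized so that $f_h(z_0)=0$ with $\dd f_h(z_0)$ the standard frame --- produces a conformally immersed CMC $h$ surface. Since $\hat\eta_h$ is already of normalized type (only a $\lambda^{-1}$ term, off-diagonal) and $C_h(z_0)=I$, a short Birkhoff-factorization argument (using that $C_h$ itself is the negative part of its own Birkhoff splitting) shows $\hat\eta_h$ is the normalized potential of $f_h$ based at $z_0$; in particular the Hopf differential $Q\,\dd z^2=-2\mu\nu_z\,\dd z^2$ is independent of $h$, and a direct computation with the Weierstrass integrand shows this is also the Hopf differential of $f$ (up to the orientation convention defining $Q$). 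Real-analytic dependence on $h$ for $h\neq0$ is standard (analytic dependence of ODE solutions on parameters plus real-analyticity of the Iwasawa splitting).

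The substance is the passage to $h=0$. Write the normalized Sym--Bobenko formula as $f_h=-\tfrac1{2h}\,\mathcal{S}(F_h)\big|_{\lambda=1}$ with $\mathcal{S}(F)=i\lambda\,\partial_\lambda F\cdot F^{-1}+\tfrac i2\bigl(F\sigma_3F^{-1}-\sigma_3\bigr)$. At $h=0$ the potential $\hat\eta_0$ is strictly lower triangular with $\hat\eta_0^2=0$, hence $C_0=\left(\begin{smallmatrix}1&0\\-\lambda^{-1}\nu&1\end{smallmatrix}\right)$ (using $\nu(z_0)=0$), and its Iwasawa splitting can be written in closed form: $F_0=(1+|\nu|^2)^{-1/2}\left(\begin{smallmatrix}1&\lambda\bar\nu\\-\lambda^{-1}\nu&1\end{smallmatrix}\right)$. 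A direct substitution gives $\mathcal{S}(F_0)\equiv0$ for all $\lambda$; this is the key vanishing, for it shows the whole construction is real-analytic in $h$ across $h=0$ and that the $1/h$ factor produces only a removable singularity, with $f_0:=\lim_{h\to0}f_h=-\tfrac12\,\partial_h\bigl(\mathcal{S}(F_h)\big|_{\lambda=1}\bigr)\big|_{h=0}$. (Equivalently, $F_0$ is the extended frame of the anti-holomorphic Gauss map $g=-\bar\nu$ of $f$, and $\mathcal{S}(F_0)=0$ records that the limiting datum carries no scale.) It then remains to evaluate this first variation: from $\partial_h\hat\eta_h\big|_{h=0}=\left(\begin{smallmatrix}0&-\mu\\0&0\end{smallmatrix}\right)\lambda^{-1}\dd z$ one gets $\partial_h C_h\big|_{h=0}$ by variation of constants, then $X:=(\partial_hF_h)\big|_{h=0}\,F_0^{-1}$ by the infinitesimal Iwasawa splitting (the $\mathfrak{su}(2)$-part of $F_0^{-1}\bigl((\partial_hC_h)\big|_{h=0}\,C_0^{-1}\bigr)F_0$), and finally $\partial_h\bigl(\mathcal{S}(F_h)\big|_{\lambda=1}\bigr)\big|_{h=0}=\bigl(i\lambda\partial_\lambda X+\tfrac i2[X,\sigma_3]\bigr)\big|_{\lambda=1}$ --- the remaining terms in $\partial_h\mathcal{S}$ combining, via the identity $\mathcal{S}(F_0)=0$, to $\tfrac i2[X,\sigma_3]$. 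Matching the resulting element of $\mathfrak{su}(2)\cong\real^3$ with $2\Re\int_{z_0}^z\bigl((1-\nu^2)e_1-i(1+\nu^2)e_2-2\nu e_3\bigr)\mu\,\dd z$, and using $\mu(z_0)=1$, $\nu(z_0)=0$, $f_0(z_0)=0$ to fix the constants, identifies $f_0$ with $f$.

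The main obstacle is the $h=0$ analysis just sketched: establishing $\mathcal{S}(F_0)\equiv0$ requires the explicit Iwasawa form of $C_0$, and identifying the first variation with the classical Weierstrass surface \emph{on the nose} --- not merely up to a congruence --- forces one to carry the implicitly defined factor $B_h$ through the infinitesimal splitting and to fix all sign and normalization conventions consistently between the Sym--Bobenko formula, the identification $\mathfrak{su}(2)\cong\real^3$, and the Weierstrass formula. A clean way to package this is to prove first, as a lemma, the abstract ``$H\to0$ degeneration'': for any real-analytic family of normalized potentials $\xi_h$ with $\xi_0$ strictly lower triangular, the basepoint-normalized DPW surfaces $f_h$ extend real-analytically across $h=0$ to a minimal surface whose classical Weierstrass data is read off from the $(2,1)$-entry of $\xi_0$ and the $(1,2)$-entry of $\partial_h\xi_h\big|_{h=0}$; the theorem is then the special case $\xi_h=\hat\eta_h$.
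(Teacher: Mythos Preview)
Your core step---writing down the explicit Iwasawa factorization of $C_0$ and checking that $\mathcal{S}(F_0)\equiv 0$, hence that $f_h$ extends real-analytically across $h=0$---is exactly what the paper does (see the expression (\ref{Czeroexpression}) and the decomposition (\ref{phizerodecomp})). Where you diverge is in the identification $f_0=f$. You propose to compute the first variation $\partial_h\mathcal{S}(F_h)\big|_{h=0}$ directly: solve for $(\partial_h C_h)\big|_{h=0}\,C_0^{-1}$ by variation of constants, project through the infinitesimal Iwasawa splitting to obtain $X$, and match $i\lambda\partial_\lambda X+\tfrac{i}{2}[X,\sigma_3]$ against the classical Weierstrass integrand. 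The paper avoids this computation entirely. Instead it determines only the \emph{metric} of $f_0$: writing the Birkhoff factor of the frame as $\textup{diag}(\rho_h^{-1},\rho_h)+O(\lambda)$, one reads off $\|\partial_x f_h\|=\tilde\rho_h^2\,|a|$ by differentiating the Sym--Bobenko formula, and the explicit $B_0$ at (\ref{phizerodecomp}) gives $\tilde\rho_0^2=1+|\nu|^2$. Thus $e^u=|\mu|(1+|\nu|^2)$ for $f_0$, which is the metric (\ref{metricandhd}) of $f$; since $f$ and $f_0$ then share metric, Hopf differential, and the initial frame at $z_0$, they coincide. Your route is more direct but, as you note, requires carrying the implicit factor $B_h$ through the linearized splitting with all conventions aligned; the paper trades that bookkeeping for a short uniqueness argument.

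One genuine omission in your sketch: you take for granted that running DPW on $\hat\eta_h$ produces an immersion on all of $\Sigma$, but $\hat\eta_h$ can have poles (at poles of $\nu$, equivalently zeros of $\mu$), and the explicit $C_0$ you write down is only valid away from them. The paper deals with this in two places. First, in the proof of Theorem~\ref{thm2} it verifies that the order conditions of Theorem~\ref{zeropoletheorem} hold at such points (here $\Ord(a)=2k$ forces $\Ord(Q)=k-1$), so that $f_h$ is indeed smooth for $h\neq 0$. Second, in the proof of Theorem~\ref{thm1} it passes from the Iwasawa frame $\hat F_h$ (defined only on the big-cell preimage $\Sigma^\circ$) to the globally defined extended coordinate frame $\hat F_{C,h}$, and extends the vanishing of the leading term from $\Sigma^\circ$ to $\Sigma$ by continuity. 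Your argument should incorporate an analogous step if you want the conclusion on all of $\Sigma$ rather than on an open dense subset.
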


The proof of this theorem is given in Section \ref{thmproofsection}. In item (2), the map $f_0$ is defined on 
the whole of $\Sigma$, but may have branch points at poles or zeros of $a$.   Note also that, if the basepoint $z_0$ is changed to a different basepoint $\tilde z_0$, then the resulting family $\tilde f_h$ is not the same family as $f_h$.
We also remark that if, in item (1), the data are given such that $\nu(z_0)\neq 0$, then there is an alternative, more general, formula for $\hat \eta_h$ given below at (\ref{generaleta}). \\

\begin{figure}[ht]
\centering
$
\begin{array}{cc}
\includegraphics[height=55mm]{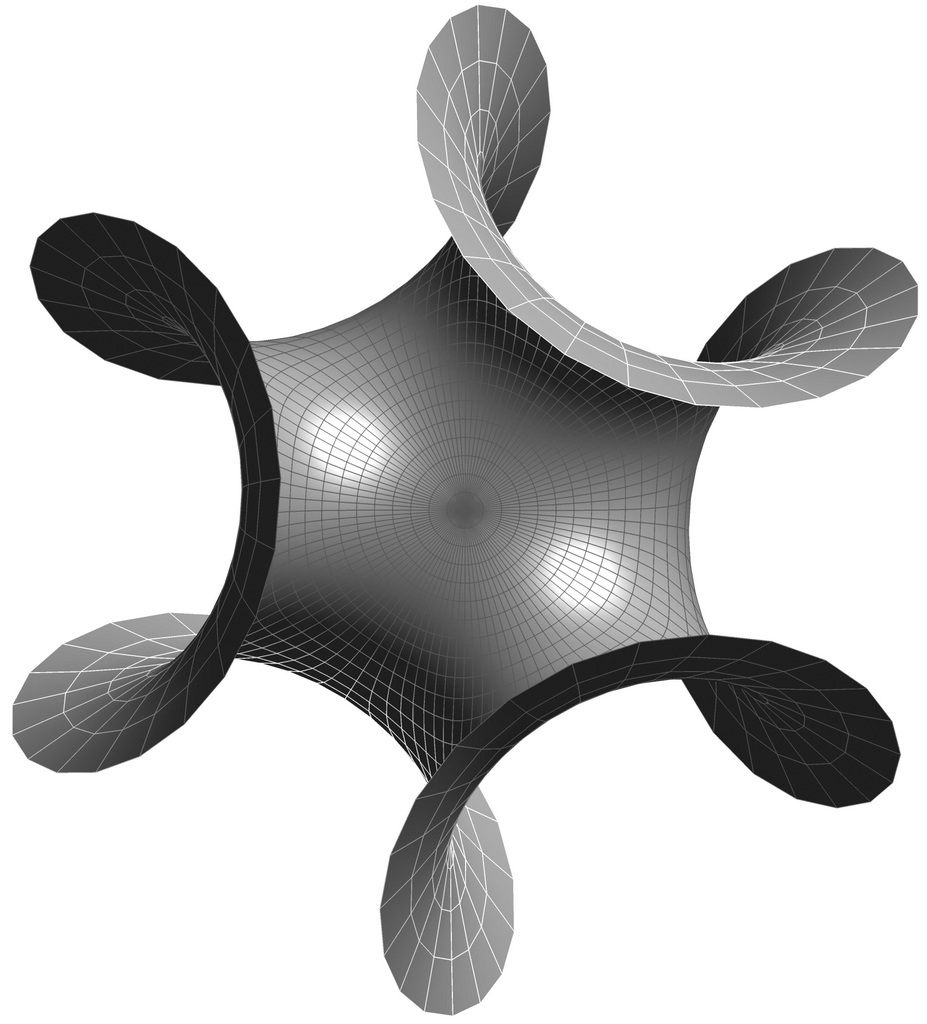} \quad & \quad
\includegraphics[height=55mm]{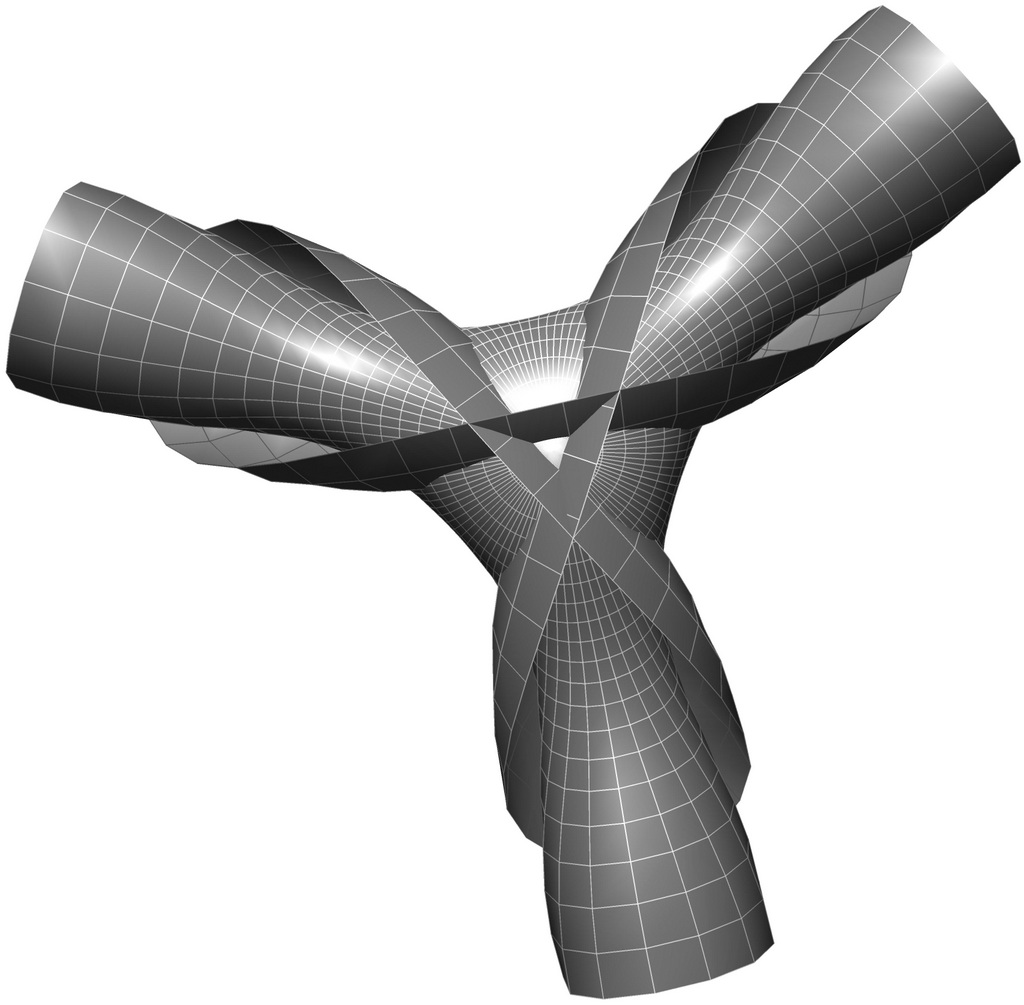}  
 \vspace{2ex} \\
 H=10^{-8}, & 
 H=1   
\end{array}
$
\caption{Left: Almost minimal version of Enneper's surface of order 2. Right: 3-legged Mr. Bubble, or Smyth surface. } 
\label{figuresmyth}
\end{figure}

Given a CMC surface or minimal surface, although the associated family depends on the choice of the basepoint, there is sometimes a natural such choice, and therefore a canonical family associated.  For example, Enneper's surface of order $k$ (See Example \ref{enneperexample} below)  has a finite order rotational symmetry about a central point.  Such a symmetry is preserved under the deformation if the basepoint is chosen to be this central point, and therefore, making this choice, there is one natural family of  CMC surfaces that have this symmetry and which includes Enneper's surface at $h=0$.  As we will see below, these turn out to be Smyth surfaces, studied in \cite{smyth}. Solutions, computed numerically for $H=10^{-8}$ and $H=1$, are shown in Figure \ref{figuresmyth}.\\

\subsection{The dressing action on minimal surfaces}
The dressing action is a group action on the space of solutions, which
generally exists for any 
integrable system represented by maps into loop groups.  It was introduced into the
study of harmonic maps by Uhlenbeck \cite{Uhl}. See Wu \cite{wu1997} for a description
for the case of CMC surfaces.
Dressing can be described as an action on the normalized potential $\hat \eta$.
A minimal surface also has a normalized potential, but, unlike in the non-minimal
case, the correspondence is not bijective: there are many minimal surfaces with
the same normalized potential.   In \cite{dorfmeisterpedittoda}, the dressing
 action is defined, via these normalized potentials, on minimal surfaces, giving
 an action on the set of equivalence classes of minimal surfaces with the same
 potential.  In Section \ref{dressingsection}, we use the analysis of Wu \cite{wu1997}
 to determine the class of dressing elements that are independent of $h$, in
 Theorem \ref{dressingthm1}.  A corollary of this, together with
 Theorem \ref{mainthm}, is that the dressing action defined in \cite{dorfmeisterpedittoda}  in fact gives a well-defined group action on
 the space of minimal immersions. This is Theorem \ref{dressingthm2}.\\

\subsection{Properties preserved under the deformation}
The classical Weierstrass data for many minimal surfaces is known. Therefore, Theorem \ref{mainthm} can easily be used to construct examples of non-minimal CMC surfaces, with the generalized Weierstrass data given explicitly by (\ref{munupotential}).  Clearly, it is of interest to 
know what properties are preserved as the mean curvature $h$ varies. \\

Global topological properties are not preserved: it is true that any minimal surface can be represented by Weierstrass data on a contractible domain (the universal cover); and the same 
holds for any non-minimal CMC surface other than the round sphere.  However, in general, any closing properties of the surface will be lost as $h$ varies.  \\

Because the Hopf differential is preserved, it follows (see Remark \ref{pcremark}) that, not
only umbilic points, but also 
principal curves in the coordinate domain are preserved under the deformation. Moreover, (see Remark \ref{pcremark2}),
the values of the principal curvatures at the basepoint are given explicitly by 
$\kappa_\pm(z_0) = \pm \kappa_0 + h$, where $\pm \kappa_0$ are the principal curvatures at $z_0$ 
for the minimal surface in the family. This gives a local picture of the deformation around the basepoint.\\

In Section \ref{symmetrysection} we investigate  surfaces with symmetries.  We consider surfaces which have a reflection symmetry about a plane and surfaces with a finite order rotational symmetry about a point in the surface.  In the first case, if the plane of symmetry contains the basepoint, then we show that the surface has such a symmetry if and only if coordinates can be chosen such that the Weierstrass data are real-valued along the real line.  In the second case, if the rotation point is the basepoint $z_0$, then we show that
the surface has the symmetry if and only if the Weierstrass data have
Laurent expansions including only certain powers of $z$.   Consequently, Theorem \ref{symmetrythm} states that such symmetries are
preserved under this deformation, provided the basepoint is chosen appropriately.


\section{The loop group formulation and DPW method}  \label{dpwsection}
In this section we summarize well known facts about CMC surfaces and their 
construction via integrable systems methods. The notation and conventions are the
same as those used in \cite{bjorling}, where more details and references can be found.\\

\subsection{The loop group characterization of CMC maps}
Let $\Sigma$ be a contractible Riemann surface, and suppose $f: \Sigma \to \E^3$ is
a conformal immersion with mean curvature $H$. 
Choosing conformal coordinates $z = x + iy$,
a function $u: \Sigma \to \real$ is defined by the expression
$\dd s^2 = 4e^{2u}(\dd x^2 + \dd y^2)$ for the induced metric.  The matrices for the first
and second fundamental forms $I$ and $II$, with respect to the coordinates $x$,$y$ are then:
\beq \label{fundamentalforms}
I = \bbar 4e^{2u} & 0 \\ 0 & 4e^{2u} \ebar, \quad \quad
II = \bbar 4H e^{2u} + Q + \bar Q & i(Q-\bar Q) \\
     i(Q-\bar Q) & 4He^{2u} -(Q+\bar Q) \ebar,
\eeq
\\
where $H := e^{-2u}\langle f_{xx} + f_{yy}, N \rangle/8$ is
the mean curvature, and 
$Q := \langle N,f_{zz} \rangle$.
The differential $2$-form 
 $Q \, \dd z^2$ is called the \emph{Hopf differential}.\\

The Lie algebra $\mathfrak{su}(2)$, is identified with $\E^3$ via the
following  basis, which is orthonormal with respect to the
 inner product $\langle X,Y \rangle = -  \text{Trace} (XY)/2$:
\bdm
e_1 = \bbar 0 & -i \\ -i & 0 \ebar, \hspace{1cm}
e_2 = \bbar 0 & 1 \\ -1 & 0 \ebar, \hspace{1cm}
e_3 =  \bbar i & 0 \\ 0 & -i \ebar.
\edm
Given a choice of unit normal $N$, 
the \emph{coordinate frame} $F: \Sigma \to SU(2)$ is uniquely determined (up to sign) by the conditions
\beq \label{framedef}
\Ad_F e_1  = \frac{f_x}{|f_x|}, \quad
\Ad_F e_2  = \frac{f_y}{|f_y|}, \quad \Ad_F e_3 = N.\\
\eeq

Differentiating the expressions
$f_z = e^u \Ad_F(e_1-ie_2)$ and $f_{\bar z} = e^u \Ad_F(e_1+ie_2)$,
 one obtains  the following expression for the
  connection coefficients $U := F^{-1}F_z$ and 
$V := F^{-1}F_{\bar{z}}$:
\begin{equation}\label{UhatandVhat}
U = \frac{1}{2} \begin{pmatrix} u_z & -2 H e^u \\ 
                                   Q e^{-u}  & -u_z \end{pmatrix} 
, \hspace{1cm} V = \frac{1}{2} \begin{pmatrix} -u_{\bar z} & - \bar Q e^{-u}\\ 
                            2  H e^u & u_{\bar z} \end{pmatrix}. \end{equation}

If $H$ is constant, we can extend the frame $F$ to a loop group valued map
$\hat F$ as follows: first extend the Maurer-Cartan form of $F$
to a loop-algebra valued $1$-form
$\hat \alpha := \hat U  \dd z + \hat V \dd \bar{z}$, where
\begin{equation}\label{withlambda}
\hat U = \frac{1}{2} \begin{pmatrix} u_z & -2 H e^u \lambda^{-1} \\ 
                                   Q e^{-u} \lambda^{-1}  & -u_z \end{pmatrix} 
, \hspace{1cm} \hat V = \frac{1}{2} \begin{pmatrix} -u_{\bar z} & - \bar Q e^{-u} \lambda\\ 
                            2  H e^u \lambda & u_{\bar z}  \end{pmatrix}. 
\end{equation}
The $1$-form $\hat \alpha$ satisfies the Maurer-Cartan equation
$\dd \hat \alpha + \hat \alpha \wedge \hat \alpha = 0$
 for all $\lambda \in \C \setminus \{ 0 \}$ 
if and only if the mean curvature $H$ is constant or, equivalently, the
Hopf differential is holomorphic.\\

Now fix a basepoint $z_0 \in \Sigma$ and set $E_0 :=  F(z_0)$. We extend the initial condition $E_0$ to a twisted loop $\hat E_0$ by the formula
\beq    \label{fhatzero}
\hat E_0 = \bbar A_0 & \lambda B_0 \\ -\lambda^{-1} \bar B_0 & \bar A_0 \ebar,
\quad \quad \textup{where } E_0 = \bbar A_0 & B_0 \\ -\bar B_0 & \bar A_0 \ebar.
\eeq
Now integrating $\hat \alpha$ with the initial condition $\hat F(z_0) = \hat E_0$,
one obtains the \emph{extended frame} $\hat F:  \Sigma \to \uu$,
a map into the twisted group of loops in $G:= SU(2)$.\\

If we denote, for a $\Lambda G^\C$-valued map $\hat X$, the corresponding map into
the group $G^\C$, obtained by evaluating at the loop value $\lambda =1$,
 by $X = \hat X |_{\lambda =1}$, then the above notation for $F$ and $\hat F$
 is consistent.\\

If $h$ is any nonzero real number, and $\lambda_0 \in \SSS^1$,
the Sym-Bobenko formula is:
\beq \label{symformula}
 \sym_{h,\lambda_0}(\hat F) :=  -\frac{1}{2h} \left. \left(  2 i \lambda 
\partial_\lambda \hat F \,  \hat F^{-1} \ + 
\, \hat F e_3 \hat F^{-1} - e_3 \right)\right |_{\lambda = \lambda_0}. 
\eeq
Note that if $\hat F: \Sigma \to \uu$ is a smooth map,
 then $\sym_{h,\lambda}(\hat F): \Sigma \to \mathfrak{su}(2) = \E^3$ is 
 also smooth.  We will mainly use the formula for the case $\lambda_0 = 1$, and
 therefore use the notation $\sym_h(\hat F) = \sym_{h,1}(\hat F)$.\\


If $\hat F$ is an extended coordinate frame for a CMC $H$ surface, and $H \neq 0$,
then $f$ is retrieved by the formula
\beq  \label{normalizedsym}
f(z) =  \mathcal{S}_H (\hat F(z)) +  f(z_0).
\eeq
 Moreover, the Sym-Bobenko formula is invariant under right multiplication by a diagonal
unitary matrix valued function. This corresponds to a change of $SU(2)$ frame
for the Gauss map.  Hence there is a well defined lift 
$[\hat F]: \Sigma  \to \uu/K$, where $K=G_\sigma$ is the diagonal subgroup, of any 
CMC $H$ immersion $f$ of a simply connected 
surface, independent of coordinates and choice of frame; and, if
$H \neq 0$, the
formula $\sym_H([\hat F])$ is well defined and gives $f$ up to a translation. 
The case $H=0$ will be discussed below.\\

Slightly more generally, define an \emph{admissible frame} to be any smooth map $\hat F$,
from a Riemann surface  $\Sigma$ into $\uu$ with the property that the
Maurer-Cartan form $\hat F^{-1} \dd \hat F$  is a Laurent polynomial
of the form $\hat \alpha = \alpha_{-1} \lambda^{-1} + \alpha_0 + \alpha_1 \lambda$
where the $(0,1)$ part of $\alpha_{-1}$ is zero.  The reality condition and twisting
on $\uu$ mean we can write
\bdm
\hat \alpha = A \lambda^{-1} \dd z + \alpha_0 + \bar A \lambda \dd \bar z,
\edm
where $A$ is an off-diagonal $\mathfrak{su}(2)$-valued function and 
$\alpha_0$ is a diagonal
$\mathfrak{su}(2)$-valued 1-form. The admissible frame is \emph{regular} if 
the upper right component $A_{12}$ is non-vanishing. 
For $H \neq 0$, the extended coordinate frame described above is a regular admissible frame.\\

Finally, for \emph{any} regular admissible frame $\hat F$ and any value of
$(h,\lambda) \in \real^* \times \SSS^1$, the map 
$f = \sym_{h,\lambda}(\hat F)$ is a conformal CMC $h$ immersion into $\real^3$.\\


\subsection{The DPW construction}\label{dpwmethodsect}
Let $\uc$ denote the group of twisted loops in $G^\C = SL(2,\C)$ 
and  $\Lambda^+ G_\sigma^\C$  and $\Lambda^- G_\sigma^\C$
the subgroups of loops which extend holomorphically to the unit disc and
the exterior disc $\{\lambda ~|~ |\lambda|>1\}$ in the Riemann sphere respectively.
For the purpose of normalizations, we also use the subgroups
\beqas
 \Lambda^-_* G^\C_\sigma := \{ B \in \Lambda^-G^\C_\sigma ~|~ B(\infty) = I\},\\
 \ustar := \{ B \in \Lambda^+G^\C_\sigma ~|~ B(0) = \textup{diag}(\rho , \rho^{-1}), ~\rho \in \real, ~\rho>0 \}.
\eeqas
The \emph{Birkhoff decomposition} \cite{PreS} states that any $g$ in a certain open dense subset 
of $\uc$ (called the \emph{big cell}) has a unique factorization
\beq \label{birkhoff}
g = g_- g_+, \quad \quad g_- \in \Lambda^-_* G^\C_\sigma, \,\,\, g_+ \in \Lambda^+ G^\C_\sigma.
\eeq
The \emph{Iwasawa decomposition} \cite{PreS}  states that
any $g$ in $\uc$ can be uniquely expressed as a product
\beq \label{iwasawa}
g = FB, \quad \quad F \in \uu, \,\,\, B \in \ustar. 
\eeq
In both decompositions, the factors on the right hand side depend real analytically on $g$.
If one takes $g_- \in \Lambda^- G^\C_\sigma$ instead of $\Lambda^-_* G^\C_\sigma$,
and $B \in \Lambda^+ G^\C_\sigma$ instead of in $\ustar$, then the factors in the 
decompositions are only unique up to a middle term which is a constant loop.\\

A brief version of the DPW method (see \cite{DorPW, DorH:cyl}) states the
following:  let $\hat F: \Sigma  \to \uu$ be an  
extended frame for a non-minimal CMC $H$ immersion $f: \Sigma \to \E^3$, where $\Sigma$ is
a contractible Riemann surface. Assume $\hat F(z_0) = \hat E_0$, of the form
(\ref{fhatzero}), at some 
 fixed basepoint $z_0$. The coordinate frame $\hat F$ is uniquely determined by $z_0$ and
 $E_0$.  Hence a unique meromorphic map $\hat \Phi: \Sigma \to \uc$
 is defined by the normalized Birkhoff decomposition, performed pointwise over the 
 pre-image $\Sigma^\circ := \{ z \in \Sigma ~|~\hat E_0^{-1} \hat F (z) \in \Lambda^-_* G^\C_\sigma \cdot \Lambda^+ G^\C_\sigma  \}$
 of the big cell:
 \bdm
\hat E_0^{-1} \hat F = \hat \Phi \hat G_+, \quad \quad \hat \Phi(z) \in \Lambda^-_* G^\C_\sigma, \quad
   \hat G_+ (z) \in \Lambda^+ G^\C_\sigma. 
  \edm
For the rest of this section and the next, we take $\hat E_0 =I$, to simplify the expressions. This has no effect on the geometry - a change of $\hat E_0$ amounts to an isometry of the ambient space $\E^3$.\\

 Note that it is simple to check that $\hat \Phi$ is holomorphic on $\Sigma^\circ$, and it is shown in \cite{DorPW} that this map has only poles at the boundary
 of this open dense set.  Moreover, in conformal coordinates $z=x+iy$, the Maurer-Cartan
 form of $\hat \Phi$ has the form:
 \bdm
 \hat \eta = \hat \Phi^{-1} \dd \hat \Phi = 
    \bbar  0 & -\frac{H}{2}a \\ \frac{Q}{a} & 0 \ebar \lambda^{-1} \dd z,
  \edm
 where $a(z)$ is meromorphic and $Q(z) \dd z^2$ is the (holomorphic) Hopf differential of $f$.
 The $1$-form $\hat \eta$ is called a \emph{normalized potential} for $f$,
 and $\hat \Phi$ is called a \emph{normalized meromorphic frame}.  An explicit formula for
 the normalized potential, in terms of the metric and Hopf differential, is
 given by Wu in \cite{wu1999}.\\
 
 Conversely,  given a pair  of functions $(a, Q)$, with
  $a$ meromorphic and $Q$ holomorphic, if the zeros and poles 
   have certain sufficient (and necessary) 
   conditions, then the formula above for $\hat \eta$
  is a meromorphic potential for a  CMC surface \cite{DH97}. The surface is uniquely
  determined by $\hat \eta$ and the basepoint $z_0$.
  The most straightforward condition on the poles and zeros of $\hat \eta$ 
  is that $a$ is holomorphic and non-vanishing, but in 
 general one has
 \begin{theorem} \cite{DH97}  \label{zeropoletheorem}
  Let $\Ord(Q)$ denote the vanishing order of $Q$ at a point. Then necessary and 
 sufficient conditions for $\hat \eta$ to correspond to a smooth surface around the point are:
 \begin{enumerate}
  \item If $\Ord(a)<0$  then $\Ord(a)=-2$ or, for some integer $r \geq 1$, either
  \bdm
  \Ord(Q)=\frac{-\Ord(a)}{2r}-2, \quad \textup{or} \quad \Ord(Q)= \frac{-\Ord(a)-2}{2r}-2;
  \edm
  \item If $\Ord(a)>0$, then, for some integer $r \geq 1$,
  \bdm
  \Ord(Q)=\frac{\Ord(a)}{2r}-2, \quad  \textup{or} \quad \Ord(Q)= \frac{\Ord(a)+2}{2r}-2.
  \edm
 \end{enumerate}
\end{theorem}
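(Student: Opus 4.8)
\emph{Proof proposal.} The statement is local, so I translate the point in question to $z=0$ and put $k:=\Ord_0(a)$, $m:=\Ord_0(Q)\ge 0$. Recall the DPW recipe producing $f$: integrate $\hat\eta$ to a meromorphic frame $\hat\Phi$ with $\hat\Phi^{-1}\dd\hat\Phi=\hat\eta$, Iwasawa-split $\hat\Phi=\hat F\hat B_+$, and set $f=\sym_h(\hat F)$. The first step I would carry out is the standard reduction of ``$f$ extends to a smooth conformal immersion across $0$'' to a purely local gauge-theoretic criterion on $\hat\eta$. Since right multiplication $\hat\Phi\mapsto\hat\Phi C$ by a $\Lambda^+G^\C_\sigma$-valued map changes $\hat F$ only by a right $\Lambda^+G^\C_\sigma$-factor, hence --- after re-Iwasawa'ing, using that a constant twisted loop in $G_\sigma$ lies in $K$ --- only by a right $K$-factor, and since the Sym-Bobenko formula (\ref{normalizedsym}) is invariant under right multiplication by diagonal unitary matrix-valued functions, such a gauge does not change $f$. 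One deduces: $f$ extends to a smooth immersion across $0$ if and only if, after a $\Lambda^+G^\C_\sigma$-valued gauge transformation that is meromorphic in $z$ near $0$, the potential $\hat\eta$ becomes a holomorphic DPW potential whose $\lambda^{-1}$-coefficient has non-vanishing off-diagonal entry at $0$ (up to interchanging the two off-diagonal entries). Thus the theorem reduces to a question about local germs of $\hat\eta$.

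Second, I would reduce $\hat\eta$ to a model depending only on $(k,m)$. Writing $\hat\eta=\begin{pmatrix}0 & \alpha \\ \beta & 0\end{pmatrix}\lambda^{-1}\dd z$, one has $\Ord_0(\alpha)=k$ and $\Ord_0(\alpha\beta)=\Ord_0(-\tfrac H2 Q)=m$, so $\Ord_0(\beta)=m-k$. A nowhere-vanishing holomorphic diagonal gauge $\mathrm{diag}(p,p^{-1})\in\Lambda^+G^\C_\sigma$ sends $(\alpha,\beta)\mapsto(p^{-2}\alpha,p^2\beta)$ and only adds a holomorphic diagonal $\lambda^0$-term, while a biholomorphic coordinate change fixing $0$ rescales by a unit; combined with a unipotent $\Lambda^+G^\C_\sigma$-gauge to absorb the remaining lower-order terms --- legitimate because $\lambda^{-1}$ enters as an overall scalar, so only the formal type of the singularity is relevant --- this should bring $\hat\eta$, up to the gauges of Step 1, to the model
\[
\hat\eta_{k,m}=\begin{pmatrix}0 & z^{k} \\ z^{m-k} & 0\end{pmatrix}\lambda^{-1}\,\dd z .
\]
Proving that $(k,m)$ is in fact a \emph{complete} local invariant of $\hat\eta$ in this sense --- so that nothing finer than the two orders can obstruct smoothness --- is, I expect, the main obstacle, and is where one must be careful about the twisting and about the possibly irregular character of the singular point.

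Finally I would solve the model. Setting $\hat\Phi=\begin{pmatrix}a & b \\ c & d\end{pmatrix}$ in $\dd\hat\Phi=\hat\Phi\,\hat\eta_{k,m}$ yields $a_z=\lambda^{-1}z^{m-k}b$, $b_z=\lambda^{-1}z^{k}a$, hence the Bessel-type scalar equation
\[
a_{zz}-\frac{m-k}{z}\,a_z-\lambda^{-2}z^{m}\,a=0
\]
(and two more of the same kind for $c,d$), whose solutions are $z^{(m-k+1)/2}$ times Bessel functions of argument $\tfrac{2i}{m+2}\lambda^{-1}z^{(m+2)/2}$ and order $\nu=\bigl|\tfrac{m-k+1}{m+2}\bigr|$, with a logarithmic degeneration when $\nu\in\mathbb{Z}$. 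From an explicit fundamental system I would determine (i) when the monodromy of $\hat\Phi$ around $0$ is trivial, so that $\hat\Phi$ is single-valued and meromorphic at $0$ and $f$ is well-defined near $0$, and (ii) when the Step 1 gauge can remove the resulting $z$-pole so that the gauged potential is a holomorphic one that is immersive at $0$ rather than having a branch point. Unwinding (i) and (ii) amounts to keeping track of the parity of $m$ (the two branches of $z^{(m+2)/2}$, which produces the ``$\pm2$'' alternatives in the statement) and of the integer $r$ governing the resonance/branching order in the Bessel expansion; I expect this to reproduce exactly the two arithmetic families listed, the asymmetry between $\Ord(a)>0$ and $\Ord(a)<0$ reflecting that $a$ appears in the numerator in one entry of $\hat\eta$ and in the denominator in the other. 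The exceptional case $\Ord_0(a)=-2$ in part (1) is precisely the regular-singular case of the model equation, for which the same computation shows the surface is smooth for \emph{every} holomorphic $Q$ --- hence no condition on $\Ord_0(Q)$ there. The necessity direction is covered by the same analysis, since any $(k,m)$ outside the listed families forces either logarithmic monodromy or an unremovable branch point at $0$. One could equally run Step 3 by an explicit sequence of $\Lambda^+G^\C_\sigma$-gauges rather than via Bessel functions; either way the bookkeeping, though lengthy, is routine once Step 2 is in place.
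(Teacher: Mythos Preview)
The paper does not prove this theorem: it is quoted from \cite{DH97} and stated without proof, so there is no argument in the present paper to compare your proposal against.

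On your sketch itself: the overall architecture --- reduce smoothness of $f$ across the singular point to a local $\Lambda^+G^\C_\sigma$-gauge question on $\hat\eta$, normalize to a model potential, then analyze the model explicitly --- is indeed the shape of the argument in \cite{DH97}. But your Step~2 is where the substance lies, and as written it is a gap rather than a proof. The assertion that a holomorphic diagonal gauge, a coordinate change, and a ``unipotent $\Lambda^+G^\C_\sigma$-gauge to absorb the remaining lower-order terms'' bring $\hat\eta$ to the pure monomial model $\hat\eta_{k,m}$ is not justified: a $\Lambda^+G^\C_\sigma$-gauge acts by conjugation \emph{plus} the additive term $\hat W_+^{-1}\dd\hat W_+$, which introduces higher powers of $\lambda$, so after gauging you are no longer dealing with a normalized potential of the original off-diagonal $\lambda^{-1}$ form, and it is not clear what ``model'' you are actually comparing to. In \cite{DH97} the point is handled not by reduction to a monomial model but by a recursive construction of the gauge itself, and the arithmetic conditions on $(\Ord(a),\Ord(Q))$ emerge as exactly the conditions under which that recursion terminates with a holomorphic, immersive potential. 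Your Bessel computation in Step~3 would recover the same arithmetic \emph{if} Step~2 were established, but you have essentially flagged Step~2 as ``the main obstacle'' and then skipped it.
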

Note that the above conditions also ensure that the potential is meromorphically integrable, because they rule out the possibility that
$\hat \eta$ has a pole of order 1. 
We further remark that if $a$ has a zero  and $\Ord(Q) \geq \Ord(a)$, so that $\hat \eta$ is holomorphic
at the point, then the surface has a branch point.\\

If $\hat \eta$ is \emph{holomorphic}, then a frame $\hat F$ is recovered 
  as follows:  solve the equation
  $\dd \hat \Phi = \hat \Phi \hat \eta$, with $\hat \Phi(z_0)= I$.
  For each $z$ perform the unique Iwasawa 
  decomposition
\bdm
\hat \Phi =  \hat F \, \hat B_+, \quad \quad \hat F(z) \in \uu, \quad \hat B_+(z) \in \ustar. 
\edm
  Then $\hat F$ is  an extended frame  for a CMC $H$ surface $f = \sym_H(\hat F)$.  
This is not the coordinate frame for  $f$ in general 
(assuming $a$ is non-vanishing so that $f$ is immersed),
 but represents the same map
$[\hat F]: \Sigma \to \uu/K$, and therefore the same surface.  \\

If $\hat \eta$ has poles, then one can prove that the surface $f$ is also immersed at a pole of $\hat \eta$. To do this, one needs to perform dressing first
(see \cite{DH97}).\\

\section{From  CMC surfaces to minimal surfaces}
Let $\Sigma$ be a contractible Riemann surface, and $f_H: \Sigma \to \E^3$ a
conformal immersion of constant mean curvature $H \neq 0$.  Choose a base point
$z_0$, and coordinates for $\E^3$ so that the coordinate frame satisfies $F(z_0)=E_0=I$.
There is associated a unique normalized potential $\hat \eta$, as
described above.  Let us now 
consider $H$ as a real parameter, and write
\bdm
 \hat \eta_h = \hat \Phi^{-1} \dd \hat \Phi = 
    \bbar  0 & -\frac{h}{2}a \\ \frac{Q}{a} & 0 \ebar \lambda^{-1} \dd z,
  \edm
and denote by $\hat \Phi_h$ the associated normalized meromorphic frame with
$\hat \Phi_h(z_0)=I$.  \\

The conditions on the zeros and poles of the potential for it to correspond to a 
smooth surface (Theorem \ref{zeropoletheorem}) do not depend on $h$, but only on $a$ and $Q$.
These conditions are necessarily satisfied, since $\hat \eta_H$ came from an 
immersed CMC surface. Hence, there is a 
smooth CMC immersion $f_h: \Sigma \to \E^3$ corresponding to $\hat \eta_h$ 
for every $h \neq 0$.  Given the choice of basepoint $z_0$, the initial conditions 
$\hat F_{C,h}(z_0)=I$ for the coordinate frame,
 and $f_h(z_0)=f_H(z_0)$ for the surface, $f_h$ is uniquely determined by the formula
\bdm
f_h(z) = \mathcal{S}_h (\hat F_{C,h}(z)) +  f_H(z_0). 
\edm
If we restrict to the preimage of the big cell, we also have a unique Birkhoff decomposition
\bdm
 \hat F_{C,h} = \hat \Phi_h \hat G_{h,+},  \quad \quad \hat G_{h,+} \in \Lambda^+G_\sigma^\C.
\edm

Thus we have a family $f_h: \Sigma \to \E^3$ of
 immersed surfaces of constant mean curvature $h$, all  with the same Hopf differential,
$Q \dd z^2$, and such that $f_h$ and $f_H$, together with their tangent planes,
agree at $z_0$. Moreover, the family $f_h$ depends
real analytically on $h$, because $h$ appears analytically in the data and all the
operations performed to obtain $f_h$ preserve this property.  We now show that this family 
includes the value $h=0$:

\begin{theorem} \label{thm1} Let $f_H$ be as above, with extended coordinate frame $\hat F_C$.
Let $\Sigma^\circ$ denote  the pre-image under $\hat F_C$ of the
big cell, i.e. the open dense set 
\bdm
\Sigma^\circ := \{z \in \Sigma ~|~ \hat F_C(z) \in 
\Lambda^-_* G^\C_\sigma \cdot \Lambda^+ G^\C_\sigma \}.
\edm 

\begin{enumerate}
\item  \label{thm1item1}
The map $\mathcal{F}: \Sigma \times \real^* \to \E^3$, given by $\mathcal{F}(z,h) = f_h(z)$,
extends to a real analytic map $\Sigma \times \real \to \E^3$. \\

\item  \label{thm1item2}
The map 
$f_0\big|_{\Sigma^\circ}: \Sigma^\circ \to \E^3$ given by 
restricting  $\mathcal{F}(z,0)$ to $\Sigma^\circ$ is a conformally immersed minimal 
surface with Hopf differential $Q \dd z^2$ and metric given by
\beq   \label{f0metric}
\dd s^2 = (1+|g|^2)^2 |a|^2(\dd x^2 + \dd y^2), \quad \quad g(z) = \int_{z_0}^z \frac{Q(\tau)}{a(\tau)} \dd \tau.
\eeq
The map $f_0$,
 together with its tangent plane, agrees with $f_H$ at $z_0$.\\
 \end{enumerate}
\end{theorem}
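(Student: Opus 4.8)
The plan is to analyze the $\lambda$-dependence of the normalized meromorphic frame $\hat\Phi_h$ explicitly as a function of $h$, and then push this through the Iwasawa splitting and the Sym–Bobenko formula. First I would observe that the potential $\hat\eta_h$ is linear in $h$ (only the upper-right entry scales with $h$), so the solution $\hat\Phi_h$ of $\dd\hat\Phi_h = \hat\Phi_h \hat\eta_h$ with $\hat\Phi_h(z_0)=I$ depends real analytically — indeed holomorphically — on $h$, including at $h=0$. At $h=0$ the potential becomes the nilpotent lower-triangular form $\hat\eta_0 = \bigl(\begin{smallmatrix} 0 & 0 \\ Q/a & 0\end{smallmatrix}\bigr)\lambda^{-1}\dd z$, and this can be integrated in closed form: since $\hat\eta_0$ is strictly lower-triangular and nilpotent of square zero, $\hat\Phi_0 = \bigl(\begin{smallmatrix} 1 & 0 \\ \lambda^{-1} g & 1\end{smallmatrix}\bigr)$ where $g(z) = \int_{z_0}^z Q(\tau)/a(\tau)\,\dd\tau$. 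This is exactly the shape of the normalized frame whose Iwasawa factor yields, under Sym–Bobenko, a minimal surface with Weierstrass data $(\tfrac{a}{2}\dd z, -g)$ — but I must be careful that the naive $h\to 0$ limit of $\sym_h$ diverges because of the $1/h$ prefactor, so the real content is to show the divergent terms cancel.

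The key step is therefore to expand $\sym_h(\hat F_{C,h})$ in powers of $h$ near $h=0$ and check that the $h^{-1}$ term vanishes, leaving a finite limit. Concretely I would write $\hat F_{C,h} = \hat\Phi_h \hat B_{h,+}^{-1}$ from the Iwasawa decomposition $\hat\Phi_h = \hat F_h \hat B_{h,+}$ (working on $\Sigma^\circ$ where the relevant cell conditions hold), note that $\hat B_{h,+}$ and $\hat F_h$ are real analytic in $h$ by the real-analytic dependence in the Iwasawa decomposition stated in the excerpt, and track the $h\to 0$ behaviour. The Sym–Bobenko formula $\sym_h(\hat F) = -\tfrac{1}{2h}(2i\lambda\partial_\lambda \hat F\,\hat F^{-1} + \hat F e_3 \hat F^{-1} - e_3)\big|_{\lambda=1}$ applied to a gauge-equivalent frame is unchanged, so I can compute with $\hat F_h$. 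Writing $\hat F_h = \hat F_h^{(0)} + h\,\hat F_h^{(1)} + O(h^2)$, the bracket expands as $(\text{bracket at } h=0) + h\cdot(\cdots) + \dots$; the claim $f_0 = \lim_{h\to 0} f_h$ is finite amounts to showing the bracket vanishes at $h=0$, i.e. $2i\lambda\partial_\lambda \hat F_0^{(0)}\,(\hat F_0^{(0)})^{-1} + \hat F_0^{(0)} e_3 (\hat F_0^{(0)})^{-1} - e_3 = 0$ identically in $\lambda$. At $h=0$ the potential has no $\lambda^{-1}$ mean-curvature term, so $\hat F_0^{(0)}$ should in fact be a $\lambda$-independent (after gauging) $SU(2)$ frame, making both $\partial_\lambda \hat F_0^{(0)} = 0$ and $\hat F_0^{(0)} e_3 (\hat F_0^{(0)})^{-1} - e_3$ computable directly — this is where the vanishing of the $h^{-1}$ term comes from.

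Once the limit exists and equals a smooth $\mathfrak{su}(2)$-valued map $f_0$, I would identify it as a minimal surface by computing its first and second fundamental forms from the $O(h)$ term of the Sym–Bobenko expansion: the linear-in-$h$ coefficient is what survives and it encodes $f_{0,z}$, whence the metric and Hopf differential. Matching against the classical Weierstrass formula with $\mu = a/2$, $\nu = -g$ gives conformality, the metric $\dd s^2 = (1+|g|^2)^2|a|^2(\dd x^2+\dd y^2)$ of \eqref{f0metric}, and Hopf differential $Q\,\dd z^2$ (which is automatic since the Hopf differential is $h$-independent throughout the family and equals $Q\,\dd z^2$ by construction). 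Agreement of $f_0$ and its tangent plane with $f_H$ at $z_0$ follows because $\hat\Phi_h(z_0)=I$ for all $h$ forces $f_h(z_0)$ and the frame at $z_0$ to be continuous in $h$, and the basepoint values are pinned by construction. The main obstacle I anticipate is the bookkeeping in the $h$-expansion of the Sym–Bobenko formula: one must verify cleanly that the apparent $1/h$ singularity cancels, which requires controlling the $\lambda$-derivative of the Iwasawa factor $\hat F_h$ at $h=0$ and its first-order correction; handling the poles of $a$ (staying on $\Sigma^\circ$, resp. $\Sigma^*$) is a secondary technical point but is dispatched by Theorem \ref{zeropoletheorem} since those conditions are $h$-independent.
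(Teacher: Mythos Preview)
Your overall strategy matches the paper's: expand the Sym--Bobenko bracket in $h$, show the $h^{-1}$ coefficient $C_0$ vanishes, then read off the metric and Hopf differential of the limiting map. But the argument you give for $C_0=0$ is wrong, and this is the heart of Item~\ref{thm1item1}.

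You claim that at $h=0$ ``the potential has no $\lambda^{-1}$ mean-curvature term, so $\hat F_0^{(0)}$ should in fact be a $\lambda$-independent (after gauging) $SU(2)$ frame, making $\partial_\lambda \hat F_0^{(0)}=0$.'' This is false. The potential $\hat\eta_0$ still has a nonzero $\lambda^{-1}$ entry, namely $Q/a$ in the lower-left slot; only the upper-right entry vanishes. Consequently the Iwasawa factor of $\hat\Phi_0$ is
\[
\hat F_0 = \frac{1}{\sqrt{1+|g|^2}}\begin{pmatrix} 1 & -\lambda \bar g \\ \lambda^{-1} g & 1 \end{pmatrix},
\]
which is genuinely $\lambda$-dependent and cannot be gauged to a constant loop by the diagonal $K$-action. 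So $\partial_\lambda \hat F_0 \neq 0$, and your proposed mechanism for $C_0=0$ collapses. The actual reason $C_0=0$ is a cancellation: for \emph{any} loop of the special shape $\bigl(\begin{smallmatrix} A & -\lambda \bar B \\ \lambda^{-1} B & \bar A\end{smallmatrix}\bigr)$, a direct two-line computation shows that $2i\lambda\,\partial_\lambda \hat F_0\,\hat F_0^{-1}$ and $\Ad_{\hat F_0}e_3 - e_3$ are exact negatives of one another at every $\lambda$. The vanishing is an algebraic identity for this class of loops, not a consequence of $\lambda$-independence.

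A second gap: you work entirely on $\Sigma^\circ$, but Item~\ref{thm1item1} asserts analyticity on all of $\Sigma\times\real$. The paper handles the boundary of $\Sigma^\circ$ by switching from the Iwasawa frame $\hat F_h$ (only defined on $\Sigma^\circ$) to the extended \emph{coordinate} frame $\hat F_{C,h}$, which is smooth on all of $\Sigma$ for $h\neq 0$; the two frames differ by a $K$-gauge on $\Sigma^\circ$, so the Sym--Bobenko brackets agree there, and then density plus continuity of $C_0$ in $z$ pushes the vanishing to the closure. Your proposal does not address this extension.
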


\begin{proof}

\textbf{Item \ref{thm1item1}:} The idea of the argument is to get two expressions for $f_h(z)$:
one that is very explicit, but only defined on an open dense set;  and  another that is not so explicit, but defined everywhere.\\

 We give the argument first on $\Sigma^\circ$, which is an 
open dense set, and then extend to $\Sigma$.  On $\Sigma^\circ$
we can use, instead of the coordinate frame, the unique smooth frame $\hat F_h$
given by the following Iwasawa decomposition 
\beq  \label{iwasawa1}
\hat \Phi_h =  \hat F_h \hat B_{h,+}, \quad \hat B_{h,+}(z) \in \Lambda^+_P G^\C_\sigma.
\eeq
Note that $\mathcal{S}_h(\hat F_h)$ has the same value whether one uses this frame
or the coordinate frame.  This frame has the advantage that it can be computed 
explicitly at $h=0$, where the potential
\bdm
\hat \eta_0 = \bbar 0 & 0 \\ \frac{Q}{a} & 0 \ebar \lambda^{-1} \dd z
\edm
can be integrated to obtain
\bdm
\hat \Phi_0 = \bbar 1 & 0 \\ \lambda^{-1} g & 1 \ebar, \quad
     g(z) = \int_{z_0}^z \frac{Q(\tau)}{a(\tau)} \dd \tau.
\edm
The  Iwasawa decomposition (\ref{iwasawa1}) is
\beq  \label{phizerodecomp}
\hat \Phi_0 =  \hat F_0 \hat B_+, \quad
     \hat F_0 = \dfrac{1}{\sqrt{1+|g|^2}}  \bbar 1 & - \lambda \bar g \\ 
               \lambda^{-1} g & 1 \ebar, \quad \hat B_+(z) \in \Lambda^+_P G^\C_\sigma.
\eeq

Now $f_h$, which is real analytic in $h$ on $\real^*$, is given by
\beqas
 f_h &=&  \sym_h(\hat F_h) +  f_H(z_0) \\
 &=& 
-\frac{1}{2h} \left. \left(  2 i \lambda 
\frac{\partial \hat F_h}{\partial \lambda} \,  \hat F_h^{-1} \ + 
\, \Ad_{\hat F_h} e_3   - e_3 \right)\right |_{\lambda = 1} +   f_H(z_0).
\eeqas
$f_H(z_0)$ is constant, so we only need consider the first term.
By construction, the expression inside the parentheses is analytic for all
$h$, and therefore has an expansion in $h$, around $h=0$, given by
\bdm
\left. \left(  2 i \lambda  \frac{\partial \hat F_h}{\partial \lambda} \,  \hat F_h^{-1} \ + 
\, \Ad_{\hat F_h} e_3  - e_3 \right)\right |_{\lambda = 1} 
= C_0  + O(h).
\edm
Analyticity of $f_h$ at $h=0$ will follow if we can show that $C_0 = 0$.
But $\hat F_h = \hat F_0 + O(h)$, and so 
\beq    \label{Czeroexpression}
C_0 = \left. \left(  2 i \lambda  \frac{\partial \hat F_0}{\partial \lambda} \,  \hat F_0^{-1} \ + 
\, \Ad_{\hat F_0} e_3  - e_3 \right)\right |_{\lambda = 1}.
\eeq
It is easy to verify that this expression is zero for any loop $\hat F_0$ in 
$\Lambda G_\sigma$ of the form 
\bdm
\bbar A & -\lambda \bar B \\ \lambda^{-1} B & \bar A \ebar,
\edm
and $\hat F_0$, given at (\ref{phizerodecomp}), is indeed of this form. Thus, $C_0=0$.\\

Finally we must consider points $z$ on the boundary of $\Sigma^\circ$, that is, points where $\hat \eta$ has a pole.
In this case let us consider the extended coordinate frame $\hat F_{C,h}$. For $h\neq 0$, this frame is well defined on the whole of $\Sigma$, because it is constructed from the coordinate frame of a smooth surface. Moreover, it is analytic in all parameters.  On the pre-image of the big cell,
  $\Sigma^\circ$, we have the relation
 \bdm
 \hat F_{C,h} = \hat F_h \bbar \mu & 0 \\ 0 & \bar \mu \ebar,
 \edm
 for some unitary function $\mu$, constant in $\lambda$. Hence, for $z \in \Sigma^\circ$,
 one obtains:
 \beqas
 \left. \left(2 i \lambda  \partial_\lambda \hat F_{C,h} \,  \hat F_{C,h}^{-1} \ + 
\, \Ad_{\hat F_{C,h}} e_3  - e_3 \right) \right|_{\lambda=1}  &=& 
  \left. \left(2 i \lambda  \frac{\partial \hat F_h}{\partial \lambda} \,  \hat F_h^{-1} \ + 
\, \Ad_{\hat F_h} e_3  - e_3 \right)  \right|_{\lambda=1} \\
 &=& C_0 + O(h).
\eeqas
We have already shown that $C_0(z) = 0$ for  $z \in \Sigma^\circ$. Since 
$C_0$ is continuous (in fact analytic) in $z$, and $\Sigma^\circ$ is open 
and dense, it must vanish everywhere.
  By a similar argument to that given above on $\Sigma^\circ$, applied now to the expression involving
  $\hat F_{C,h}$ on $\Sigma$,
it follows that, for all $z$, the map $f_h(z)$ is analytic in $h$ at  $h=0$.\\

\textbf{Item \ref{thm1item2}:}

First note that the extended coordinate frame $\hat F_{C,h}$, for $h \neq 0$, is in the big cell for all $z \in \Sigma^\circ$.
Otherwise, $\hat \eta_h$ would have a pole on $\Sigma^\circ$: but this condition is independent of 
$h$, and $\hat \eta_H$ has no poles on this set.
Now, Birkhoff decomposing $\hat F_{C,h}$ pointwise we have
$\hat F_{C,h} =  \hat \Phi_h \hat H_{+,h}$,  for a unique real-analytic
$\Lambda^+ G_\sigma^\C$-valued map  $\hat H_{+,h}$, which has a Fourier
expansion in $\lambda$ of the form
\bdm
\hat H_{+,h} = \bbar \rho_h^{-1} & 0 \\ 0 & \rho_h \ebar + O(\lambda).
\edm
It follows that $\hat U_h := \hat F^{-1}_{C,h}  (\hat F_{C,h})_z$ is
of the form
\bdm
\hat U_h = \bbar * & - \rho^2_h \dfrac{h a}{2} \lambda^{-1} \\ \rho^{-2}_h \dfrac{Q}{a} \lambda^{-1} & * \ebar.
\edm
Differentiating the formula $f_h(z) = \sym_h(\hat F_{C,h}(z))+ f(z_0)$, we
obtain
\bdm
\frac{\dd f_h}{\dd z} =  \frac{1}{2} a \rho_h^2 \Ad_{F_{C,h}}(e_1-i e_2).
\edm
Similarly, since the reality condition on the loop group means that
 $\hat V_h := \hat F^{-1}_{C,h}  (\hat F{C,h})_{\bar z} =  -\overline{\hat U_h}^t$ one computes 
 $\frac{\dd f_h}{\dd \bar z} = \frac{1}{2} a \rho_h^2 \Ad_{F_{C,h}}(e_1+i e_2)$,
so that
\bdm
\frac{\dd f_h}{\dd x} =  a \rho_h^2 \Ad_{F_{C,h}}(e_1), \quad \quad
\frac{\dd f_h}{\dd y} =  a \rho_h^2 \Ad_{F_{C,h}}(e_2).
\edm
We want to take the limit as $h \to 0$. For this consider the normalized
 frame $\hat F_h$, given by (\ref{iwasawa1}).  This has the Birkhoff
 decomposition 
 $\hat F_h = \hat E_0 \hat \Phi_h \hat B_{h,+}^{-1}$, with the Fourier expansion
 \bdm
 \hat B_{h,+}^{-1} = \bbar \tilde \rho_h^{-1} & 0 \\ 0 & \tilde \rho_h \ebar + O(\lambda), \quad  \quad \tilde \rho_h(z) \in \real_{>0}.
 \edm
 Since the factor $\hat X \in \Lambda G_\sigma$ of any Iwasawa decomposition $\hat \Phi_h = \hat X \hat B_+$ is unique up to right multiplication by a constant (in $\lambda$) matrix,  the values
  $\hat B_{h,+}^{-1}(z)$ and $\hat H_{h,+}(z)$ are necessarily related by left
 multiplication by such a matrix, and we have
 \bdm
 \rho(z) = \tilde \rho(z) e^{i \theta_h(z)}, \quad \quad \theta_h(z) \in \real.
 \edm
Thus we have
\bdm 
\left\| \frac{\partial f_h}{\partial x}\right\| 
= \left\| \frac{\partial f_h}{\partial y}\right\| =    \tilde \rho^2_h |a|. 
\edm
Now by the explicit Iwasawa decomposition (\ref{phizerodecomp}) of
$\hat \Phi_0$, we have, at $h=0$,
\bdm
\tilde \rho_0 = \sqrt{1+|g|^2}, \quad  \quad g(z) = \int_{z_0}^z \frac{Q(\tau)}{a(\tau)} \dd \tau.
\edm
It follows that 
\beq \label{limitingmetric}
\left \| \frac{\partial f_0}{\partial x}\right\| = \left\| \frac{\partial f_0}{ \partial y}\right\| = (1+|g|^2) |a|.
\eeq
Together with the fact that $\partial_x f_h$ and $\partial_y f_h$ are
orthogonal for all $h \neq 0$,   this implies that 
 $f_0$ is conformally immersed on $\Sigma^\circ$, with metric given by
 (\ref{f0metric}).
  The fact that the mean curvature
is zero and the Hopf differential is $Q \dd z^2$ follow by continuity with
respect to the parameter $h$.\\
\end{proof}

\begin{remark} The above proof shows, in fact, that $f_0$ is conformally immersed on the set where
$(1+|g|^2)|a|$ is finite and non-vanishing.  This set includes $\Sigma^\circ$, but would be larger in general.
\end{remark}




\section{From minimal surfaces to  CMC surfaces}
The connection between the loop group formulation for CMC surfaces and the classical
Weierstrass representation for minimal surfaces was investigated by Dorfmeister, Pedit and Toda \cite{dorfmeisterpedittoda}.  More details of the setup used here can be found (with slightly different conventions) in that reference.\\

\subsection{The classical Weierstrass representation in terms of the $SU(2)$-frame}
\label{classicalwsect}
Choosing the symmetric space representation $\SSS^2 = SU(2)/\SSS^1$, where $\SSS^1$ is
the diagonal subgroup, and the complex structure given by 
\bdm
\mathfrak{p}^\C = \C \bbar 0 & 0 \\ 1 & 0\ebar \oplus \C \bbar 0 & 1 \\ 0 & 0 \ebar =
T_0^{(1,0)} \SSS^2 \oplus T_0^{(0,1)} \SSS^2,
\edm
a map $g: \Sigma \to \SSS^2$ is holomorphic if and only if any lift $F$ into $SU(2)$
satisfies $F^{-1} \dd F = \alpha_\mathfrak{p}^\prime + \alpha_\mathfrak{k} + \alpha_\mathfrak{p}^{\prime \prime}$, with $\alpha_\mathfrak{p}^\prime$ 
a  $T_0^{(1,0)} \SSS^2$-valued 1-form.    Hence the expression (\ref{UhatandVhat})
shows that a conformally immersed surface in $\E^3$ is minimal if and only if
its Gauss map is holomorphic.\\

The classical Weierstrass representation for the surface is obtained as follows:
if $F_C$ is  the coordinate frame defined by (\ref{framedef}), we can write
\bdm
F_C = \bbar A & B \\ -\bar B & \bar A\ebar, \quad \quad 
F_C^{-1} \partial _z F_C = \frac{1}{2} \begin{pmatrix} u_z & 0 \\ 
                                   Q e^{-u}  & -u_z \end{pmatrix}.
\edm
We deduce from this that 
\bdm
A = e^{-u/2}s, \quad B = e^{-u/2} \bar r,
\edm
for a pair of  holomorphic functions $s$ and $r$ which satisfy
\bdm
 e^u = s \bar s + r \bar r, \quad \quad
Q = 2(rs_z - s r_z).
\edm

Since $F_C$ is the coordinate frame, 
we have $f_z = e^u \Ad_F(e_1-ie_2)$, which works out to
\beq  \label{fzrs}
f_z = (s^2-r^2)\, e_1 -i(s^2+r^2)\, e_2 - 2sr \, e_3,
\eeq
and the Weierstrass representation is
\beq  \label{wformula}
f = 2 \Re \int_{z_0}^z f_z \dd z.
\eeq
The commonly used Weierstrass representation states that, given a
holomorphic function $\mu$ and a meromorphic function $\nu$ on $\Sigma$,
such that $\mu \nu^2$ is holomorphic, then a minimal surface $f: \Sigma \to \real^3$ is 
given by the above integral, with
\beq  \label{classicalw}
f_z =  \mu(1-\nu^2)e_1 -  i \mu (1+\nu^2) \, e_2 -  2 \mu \nu \, e_3.
\eeq
(The conventions used here are chosen here for convenience).
The surface is regular at points where either
\begin{enumerate}
\item $\Ord(\mu) = 0$ and $\Ord(\nu) \geq 0$,  or
\item $0 \leq \Ord(\mu) = -2 \Ord(\nu)$.
\end{enumerate}

Comparing this with our data, we have:   
$s^2 = \mu$, $r^2 = \mu \nu^2$, $sr = \mu \nu$.
Thus, given classical Weierstrass data $\mu$ and $\nu$, the coordinate frame above is
well defined on the set $\Sigma$:  the function $\mu$ has zeros only of even order
and, up to an irrelevant sign, we can solve for
\beq  \label{srmunu}
s= \sqrt{\mu} , \quad \quad r=  \nu \sqrt{\mu},
\eeq
and the metric and Hopf differential are given by
\beq  \label{metricandhd}
e^u = |\mu|(1+|\nu|^2), \quad \quad Q = -2\mu \nu_z.
\eeq
Note that  $s$ and $r$ are both holomorphic, and so no meromorphic functions are used in this alternative representation.
\\

\subsection{The loop group frames for a minimal surface}

Comparing the extended coordinate frame $\hat F_C$ of a CMC surface with the normalized
potential $\hat \eta$, one deduces that the surface is minimal if and only if the normalized potential is of a simple form:
\bdm
\hat \eta = \bbar 0 & 0 \\ p & 0 \ebar \lambda^{-1} \dd z,
\edm
where $p$ is some meromorphic function.\\

For the coordinate frame $F_C$ derived above, the extended  frame $\hat F_C$, defined  by (\ref{withlambda}),  has the simple expression:
\beq \label{mincoordframe}
\hat F_C = e^{-u/2} \bbar  s & \lambda \bar r\\ - \lambda^{-1} r & \bar s \ebar
   =  \dfrac{1}{\sqrt{|\mu| (1+|\nu|^2)}} \bbar \sqrt{\mu} & \lambda \overline{\nu \sqrt{\mu}} \vspace{1ex}\\
      - \lambda^{-1} \nu \sqrt{\mu}  & \overline{\sqrt{\mu}} \ebar.
\eeq

On the other hand, as we showed for $\hat \eta_0$ in the proof of Theorem \ref{thm1}, we can, on the set $\Sigma^\circ$ on which $p$ has no poles, integrate $\hat \eta$ explicitly, with the initial condition $\hat \Phi(z_0) = I$,
and perform an 
Iwasawa decomposition to obtain another frame 
\bdm 
\hat F =  \hat E_0 \frac{1}{\sqrt{1+|q|^2}} \bbar 1 & -\lambda \bar q\\ \lambda^{-1} q & 1 \ebar,
\quad \quad 
\hat E_0 = \hat F_C(z_0) = \bbar A_0 &  B_0 \lambda \\ -\bar B_0 \lambda^{-1} & \bar A_0 \ebar,
\edm
where $q(z) = \int_{z_0}^z p(\tau) \dd \tau$. The integral is well defined because
any poles of $p$, on $\Sigma$, are assumed to be of even order. 
This amounts to:
\beq  \label{minnormframe}
\hat F =\frac{1}{\sqrt{1+|q|^2}} \bbar A_0 + B_0 q & \lambda(-A_0 \bar q + B_0) \\
  \lambda^{-1}(\bar A_0 q - \bar B_0) & \bar A_0 + \bar B_0 \bar q \ebar.
\eeq

 The frame $\hat F$
 differs from $\hat F_C$ by right multiplication by a map into $\SSS^1$. 
In other words
\bdm
\hat F_C = \hat F \bbar e^{i \theta} & 0 \\ 0 &  e^{-i \theta}\ebar.
\edm

As pointed out previously (at (\ref{Czeroexpression})), we have
 $\sym_h(\hat F) = 0$ for the type of loop given at (\ref{minnormframe}), so a minimal surface
is not likely to be obtained from its extended frame by a variant of the
 Sym-Bobenko formula. 
  This is consistent with the fact that, unlike a non-minimal CMC surface, a minimal surface is not determined by its Gauss map -- one needs to know the  Hopf differential as well.  \\

Comparing $\hat F_C$ and $\hat F$, we see that the ratio $\nu/(-1)$ is the same 
as $(\bar A_0 q - \bar B_0)/( A_0  + B_0 q)$ which is solved to get 
\beq   \label{qformula}
q = \frac{\bar B_0  - A_0 \nu}{\bar A_0  + B_0 \nu},
\eeq
 and hence a formula for $p = q_z$ in terms of $\mu$ and $\nu$.
We summarize this as:

\begin{proposition}   \label{minimalprop}
Let $\Sigma \subset \C$ be a simply connected domain,
and $f: \Sigma \to \real^3$  a
minimal immersion with classical Weierstrass data $\mu$ and $\nu$.
Choose any  basepoint $z_0 \in \Sigma$. 
Let $\hat F_C$ be the extended coordinate frame given by (\ref{mincoordframe}), 
and (\ref{framedef}), denoting the initial data for $F_C(z_0)$ by
\beq   \label{initialdata}
 E_0 = \bbar A_0 & B_0 \\ - \bar B_0 & \bar A_0 \ebar,
 \quad A_0 = \frac{\sqrt{\mu_0}}{\sqrt{|\mu_0| (|\nu_0|^2+1)}},
 \quad B_0 =  \frac{\overline{\nu_0 \sqrt{\mu_0}}}{ \sqrt{|\mu_0|(|\nu_0|^2+1)}}.
\eeq
Then the Hopf differential and normalized potential for $\hat F$ are given by
\beq  \label{Qandp}
Q = -2\mu \nu_z, \quad \hat \eta = \bbar 0 & 0 \\ p \lambda^{-1} & 0 \ebar \dd z, \quad
p = -\frac{\nu_z}{(\bar A_0  + B_0 \nu)^2}.  \vspace{2ex}
\eeq

Conversely:  
Let $Q$ be a holomorphic function and $p$ a meromorphic function on $\Sigma$,
such that:
\begin{enumerate}
\item  \label{ass1}
$\Ord (p) \neq -1$;
\item   \label{ass2}
$Q/p$ is holomorphic, and we have $\Ord(Q)(z) \geq -\Ord(p)(z) -2$
at any pole $z$ of $p$.
\end{enumerate}   
Let $z_0 \in \Sigma \setminus \{\textup{poles of $p$}\} \cup \{\textup{zeros of }Q/p\}$.  Set  
\bdm
 q = \int_{z_0}^z p(\tau) \dd \tau,
 \quad  \quad  \bar A_0 := \left(\frac{ Q(z_0)}{p(z_0)}\frac{\bar  p(z_0)}{\bar  Q(z_0)}\right)^{1/4}
\edm
and
\bdm
 \quad \nu :=  -\bar A_0^2 q, \quad
\quad \mu := \dfrac{Q}{2 p} \bar A_0^{-2}.
\edm
Then $\mu$ and $\nu$ are the Weierstrass data for a unique 
 minimal surface (possibly with branch points) $f: \Sigma \to \real^3$, given by
the Weierstrass formulae (\ref{wformula}) and (\ref{classicalw}).  The surface is regular
at any point where either
\begin{enumerate}
\item [(i)]   $\Ord(Q)=\Ord(p)$, or
\item [(ii)]   $\Ord(Q) = -\Ord(p)-2$, where $\Ord(p) \leq -2$. 
\end{enumerate}
The Hopf differential is given by $Q\dd z^2$, 
 the coordinate frame for $f$ has
initial condition
\beq  \label{wdataic}
F(z_0) = \bbar A_0 & 0 \\ 0 & \bar A_0 \ebar,
\eeq
and the normalized potential for $\hat F$ is given by 
\bdm
\bbar 0 & 0 \\ p \lambda^{-1} & 0 \ebar \dd z.
\edm.
\end{proposition}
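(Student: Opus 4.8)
The plan is to prove the two directions of Proposition \ref{minimalprop} by essentially unwinding the identifications set up in Section \ref{classicalwsect} and \S"The loop group frames for a minimal surface", and checking that the algebraic formulas are consistent. For the forward direction, I would start from the classical Weierstrass data $(\mu,\nu)$ of the given minimal immersion $f$. Using the identification $s=\sqrt\mu$, $r=\nu\sqrt\mu$ from (\ref{srmunu}), the explicit coordinate frame $\hat F_C$ is the one displayed in (\ref{mincoordframe}); by (\ref{metricandhd}) its Hopf differential is $Q=-2\mu\nu_z$, which is the first formula in (\ref{Qandp}). The initial data $E_0$ is obtained by evaluating $\hat F_C$ at $z_0$, which is exactly (\ref{initialdata}). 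Then the frame $\hat F$ obtained by integrating the normalized potential and Iwasawa-decomposing is the one in (\ref{minnormframe}), and it differs from $\hat F_C$ only by a right $\SSS^1$-factor, so it represents the same surface. Comparing the ratio of the lower-left to the upper-left entry of $\hat F$ with that of $\hat F_C$ — namely $-\nu$ — yields (\ref{qformula}) for $q$ in terms of $\mu,\nu$, and differentiating gives $p=q_z=-\nu_z/(\bar A_0+B_0\nu)^2$, the last formula in (\ref{Qandp}). This direction is essentially bookkeeping: all the hard analytic content was already done earlier.

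For the converse, I would run the same chain of identities backwards. Given $(Q,p)$ satisfying the stated conditions, first check that the prescribed $\bar A_0$ is well-defined: the fourth root makes sense because the quantity $\frac{Q(z_0)}{p(z_0)}\cdot\frac{\bar p(z_0)}{\bar Q(z_0)}$ has modulus $1$ and is nonzero (using $z_0$ avoids poles of $p$ and zeros of $Q/p$), so $\bar A_0$ is a unimodular complex number; note also that $A_0,\bar A_0$ are then consistent with $|A_0|^2+|B_0|^2=1$ once $B_0$ is read off. Then define $\nu:=-\bar A_0^2 q$ and $\mu:=\frac{Q}{2p}\bar A_0^{-2}$ as prescribed; one checks directly that $\mu\nu^2$ is holomorphic (using assumption (\ref{ass2}) on the orders at poles of $p$) and that $\nu$ is meromorphic with poles only where $p$ is, so $(\mu,\nu)$ is admissible classical Weierstrass data and defines a minimal surface $f$ via (\ref{wformula})–(\ref{classicalw}), unique once the basepoint is fixed. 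The regularity statement — that $f$ is immersed exactly where (i) or (ii) holds — follows from the regularity criterion stated after (\ref{classicalw}) (conditions $\Ord(\mu)=0$, $\Ord(\nu)\ge0$, or $0\le\Ord(\mu)=-2\Ord(\nu)$) translated through the formulas $\mu=\frac{Q}{2p}\bar A_0^{-2}$, $\nu=-\bar A_0^2 q$, using $\Ord(q)=\Ord(p)+1$ at a zero of $p$ and $\Ord(q)=0$ generically. Finally, I must verify that $(Q,p)$ really are the Hopf differential and normalized potential of this surface: compute $-2\mu\nu_z = -2\cdot\frac{Q}{2p}\bar A_0^{-2}\cdot(-\bar A_0^2 q_z) = Q$ since $q_z=p$, and check that the normalized potential of the frame (\ref{mincoordframe}) for this $(\mu,\nu)$ is $p\lambda^{-1}\dd z$ with $p=-\nu_z/(\bar A_0+B_0\nu)^2$; substituting $\nu=-\bar A_0^2 q$, $\nu_z=-\bar A_0^2 p$ and $\bar A_0+B_0\nu = \bar A_0 - B_0\bar A_0^2 q$ one should recover $p$ after simplification — this is the one algebraic identity that requires care, and it uses the relation between $A_0,B_0,\bar A_0$ coming from $E_0\in SU(2)$ and the choice of $\bar A_0$. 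Verifying (\ref{wdataic}), i.e.\ that the coordinate frame has initial condition $\mathrm{diag}(A_0,\bar A_0)$, then follows since $\nu(z_0)=-\bar A_0^2 q(z_0)=0$, so $B_0 = \overline{\nu_0\sqrt{\mu_0}}/\sqrt{|\mu_0|(|\nu_0|^2+1)}=0$ and $A_0=\sqrt{\mu_0}/|\sqrt{\mu_0}|$, which matches $|\bar A_0|=1$.

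The main obstacle I anticipate is purely algebraic rather than conceptual: confirming that the seemingly ad hoc definition $\bar A_0=\big(\tfrac{Q(z_0)}{p(z_0)}\tfrac{\bar p(z_0)}{\bar Q(z_0)}\big)^{1/4}$ is exactly what is forced by consistency, i.e.\ that with this choice the two expressions $Q=-2\mu\nu_z$ and $p=-\nu_z/(\bar A_0+B_0\nu)^2$ hold simultaneously and reproduce the given $(Q,p)$, and that the resulting $\mu$ indeed has zeros only of even order so that $\sqrt\mu$ in (\ref{mincoordframe}) makes sense. Tracking the branch of the fourth (and square) roots consistently, and making sure the sign ambiguities in $\sqrt\mu$ (which are ``irrelevant'' per the remark after (\ref{srmunu})) do not obstruct well-definedness of the surface, is the delicate point. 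Everything else reduces to substituting the definitions into the already-established formulas (\ref{srmunu}), (\ref{metricandhd}), (\ref{mincoordframe}), (\ref{minnormframe}), (\ref{qformula}) and reading off the result.
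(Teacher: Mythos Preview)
Your proposal is correct and follows essentially the same route as the paper: the forward direction is just a summary of the identities already derived before the proposition, and the converse checks that $(\mu,\nu)$ are admissible Weierstrass data and then applies the forward direction to recover $(Q,p)$. The one place you anticipate difficulty---verifying $\tilde p = p$---is in fact trivial once you use what you observe at the end, namely $\nu(z_0)=0$ and hence $B_0=0$: then $-\nu_z/(\bar A_0 + B_0\nu)^2 = \bar A_0^2 p/\bar A_0^2 = p$ immediately, so you should invoke $B_0=0$ before that step rather than after.
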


\begin{proof}
The formulae at (\ref{Qandp}) for $Q$ and $p$ in terms of $\mu$ and $\nu$ have
already been derived, since $p$ is obtained by differentiating the formula
(\ref{qformula}) for $q$. \\

For the converse, first note that $q$ is well defined because of the assumption
(\ref{ass1}) on the poles of $p$. Next, $\nu$ is meromorphic, $\mu$ is holomorphic 
and the assumption (\ref{ass2}) that $Q/p$ is holomorphic implies that, if
$p$ has a zero at a point then
$\Ord(\mu \nu^2) = \Ord(-Qq^2/(2p))= \Ord(Q) + \Ord(p) + 2>0$ at this point. On the other 
hand if $p$ has a pole then the assumption on the orders of vanishing 
implies that $\Ord(\mu \nu^2) \geq 0$.
Thus $\mu \nu^2$ is holomorphic.  Hence $\mu$ and $\nu$ can be taken as 
the Weierstrass
data for a minimal surface $f: \Sigma \to \real^3$. The surface is regular at points
where  $\Ord(\mu) = 0$ and $\Ord(\nu) \geq 0$,  or $\Ord(\mu) = -2 \Ord(\nu) \geq 0$, which
translates to the conditions (i) and (ii).\\

Finally, we have $\nu_0 = \nu(z_0) = 0$, and $\mu_0 = \mu(z_0)=Q(z_0)/(2 \bar A^2_0 p(z_0))$,
and the   initial condition $E_0$ from (\ref{initialdata}) is that  stated at (\ref{wdataic}).
By the first part of the Theorem, the corresponding normalized potential and  the Hopf differential for $f$
 are given by the formula at (\ref{Qandp}), 
as 

\bdm
\hat \eta = \bbar 0 & 0 \\ \tilde p \lambda^{-1} & 0 \ebar \dd z, \quad
\tilde p = -\frac{\nu_z}{(\bar A_0 + B_0 \nu )^2} = -\frac{-\bar A_0^2 p}{\bar A_0^2} = p,
\edm
and $\tilde Q = -2 \mu \nu_z = (Q/(p))\bar A_0^{-2} (q_z \bar A_0^2) = Q$.\\
\end{proof}

\subsection{Proof of Theorem \ref{mainthm}}  \label{thmproofsection}
Consider the meromorphic data $(\hat \eta_0,  Q)$, where
\bdm
\hat \eta_0 = \bbar 0 & 0 \\ p  & 0 \ebar \lambda^{-1} \dd z,
\edm
associated, by Proposition \ref{minimalprop}, to a minimal immersion $f: \Sigma \to \E^3$. The data is unique given a basepoint $z_0$.   After a translation and rotation of $\E^3$, and a simple change of conformal coordinates, we 
may assume that
\bdm
\mu(z_0)= 1, \quad \quad \nu(z_0) = 0, 
\edm
so that $\hat E_0 = I$. 
The function $p$ then simplifies to $-\nu_z$ and the function
$a=Q/p$ is then, by (\ref{Qandp}) reduced to
\bdm
a: = Q/p = 2\mu.
\edm
This is holomorphic, and so we can define a normalized
meromorphic potential
\beq   \label{simpleetah}
\hat \eta_h = \bbar 0 & -\frac{hQ}{2p} \\ p & 0 \ebar \lambda^{-1} \dd z
= \bbar 0 & -h \mu  \\ - \nu_z & 0 \ebar \lambda^{-1} \dd z,
\eeq
for any real value of $h$. \\

According to Theorem \ref{thm1}, there is a continuous family of  CMC $h$
 surfaces $f_h$ associated to $\hat \eta_h$, which includes a minimal surface $f_0$. 
We now show that  $f_0$ is  the original surface $f$:

\begin{theorem}   \label{thm2}
Let $f$, $\hat \eta_h$ and $z_0$ be as above.  Then, for every $h \neq 0$, 
the $1$-form $\hat \eta_h$ is the
normalized potential for a unique immersed CMC $h$ surface $f_h: \Sigma \to \E^3$,
obtained via the DPW construction with initial condition
 $\hat E_0 = I$. 
 The map $f_0: \Sigma \to \E^3$, obtained from Theorem \ref{thm1} as $f_0(z) = \mathcal{F}(z,0)$,
is identical with $f$.
\end{theorem}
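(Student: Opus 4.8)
The plan is to show that the DPW construction applied to $\hat\eta_h$, as $h\to 0$, reproduces the original minimal surface $f$, by identifying the limiting frame given in the proof of Theorem \ref{thm1} with the frame $\hat F$ for $f$ obtained in Proposition \ref{minimalprop}. First I would recall that, by the normalization $\mu(z_0)=1$, $\nu(z_0)=0$, we have $\hat E_0 = I$, and the potential $\hat\eta_h$ of (\ref{simpleetah}) has $a = 2\mu$, $p=-\nu_z$, so that at $h=0$ the potential degenerates to $\hat\eta_0 = \left(\begin{smallmatrix} 0 & 0 \\ p & 0\end{smallmatrix}\right)\lambda^{-1}\dd z$ — which is precisely the normalized potential attached to $f$ in Proposition \ref{minimalprop} (with the same basepoint and the same initial condition $\hat E_0 = I$). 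The existence and immersion statement for $f_h$, $h\neq 0$, is already contained in the discussion preceding Theorem \ref{thm1} together with Theorem \ref{zeropoletheorem}: the zero/pole conditions on $(a,Q)$ are independent of $h$ and are satisfied because $(a,Q)$ come from the genuine minimal surface data.

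The core of the argument is the $h=0$ identification. From the proof of Theorem \ref{thm1}, on the open dense set $\Sigma^\circ$ the limiting frame is
\[
\hat F_0 = \frac{1}{\sqrt{1+|g|^2}}\bbar 1 & -\lambda\bar g \\ \lambda^{-1} g & 1 \ebar,
\qquad g(z) = \int_{z_0}^z \frac{Q(\tau)}{a(\tau)}\dd\tau = \int_{z_0}^z p(\tau)\dd\tau.
\]
On the other hand, the frame $\hat F$ for the minimal surface $f$ from Proposition \ref{minimalprop} (equation (\ref{minnormframe})), with $\hat E_0 = I$, i.e. $A_0 = 1$, $B_0 = 0$, is exactly
\[
\hat F = \frac{1}{\sqrt{1+|q|^2}}\bbar 1 & -\lambda\bar q \\ \lambda^{-1} q & 1 \ebar,
\qquad q(z) = \int_{z_0}^z p(\tau)\dd\tau.
\]
Since $g = q$, these two frames coincide on $\Sigma^\circ$. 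Therefore $f_0$ and $f$ have the same (normalized) frame up to the $\SSS^1$-ambiguity that does not affect the surface, so they represent the same class $[\hat F]:\Sigma\to\uu/K$. It then remains only to pin down the translational constant: both $f_0$ and $f$ pass through $f(z_0)=f_H(z_0)$ with matching tangent plane at $z_0$ by construction (Theorem \ref{thm1}, item 2, states $f_0$ agrees with $f_H$ at $z_0$; and $f$ was the surface from which $\hat\eta$ was built, with the same basepoint normalization). Hence $f_0 \equiv f$ on $\Sigma^\circ$, and by real-analyticity (or continuity) of both sides in $z$, on all of $\Sigma$.

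The main obstacle I expect is bookkeeping the frames versus the immersions carefully: $\hat F$, $\hat F_0$ and the extended coordinate frames $\hat F_C$, $\hat F_{C,h}$ differ by right multiplication by diagonal $\SSS^1$-valued functions, and one must check that the passage from "same frame up to this ambiguity" to "same surface" is legitimate — this is exactly the content of the remark, made in Section \ref{dpwsection}, that $\sym_h$ is invariant under right multiplication by diagonal unitaries and hence descends to $\uu/K$. A second point requiring a little care is that the minimal surface is not determined by its Gauss map alone, so one genuinely needs that \emph{both} the Gauss map (i.e. the frame class) \emph{and} the Hopf differential of $f_0$ match those of $f$; the Hopf differential of $f_0$ is $Q\,\dd z^2$ by Theorem \ref{thm1}, and $Q$ is unchanged from $f$ by the definition of $\hat\eta$, so this is immediate, but it is worth stating explicitly. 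Finally, one should note that the identification is first obtained on $\Sigma^\circ$ and then extended to $\Sigma$ using that $f$ is defined and real-analytic on all of $\Sigma$ (possibly with branch points at zeros/poles of $a$) and $\Sigma^\circ$ is dense.
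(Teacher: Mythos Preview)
Your proposal is correct and follows the same overall strategy as the paper, with one difference in the key identification step. The paper, rather than matching frames, invokes the explicit metric formula (\ref{f0metric}) from Theorem~\ref{thm1}: with $a=2\mu$ and $g=q=-\nu$, the metric of $f_0$ becomes $e^u = |\mu|(1+|\nu|^2)$, which is exactly the metric (\ref{metricandhd}) of $f$; since the two surfaces also share the Hopf differential and Gauss map, the fundamental theorem of surfaces gives $f_0=f$ up to a rigid motion, which is then fixed by the initial conditions $f(z_0)=0$, $f_z(z_0)=e_1-ie_2$. Your route via $\hat F_0=\hat F$ is equally valid and arguably more conceptual, but it leans on the step that the Gauss map (the frame class) together with the Hopf differential determines a minimal immersion --- true, since $\nu$ is read off the frame and then $\mu = -Q/(2\nu_z)$, but this is exactly the point you flag as ``requiring a little care'' and it would be cleaner to state it as a lemma rather than an afterthought.

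One small gap: the claim that $\hat\eta_h$ yields an \emph{immersed} CMC $h$ surface for $h\neq 0$ is not quite covered by the discussion preceding Theorem~\ref{thm1}, because there the data $(a,Q)$ were assumed to come from a nonminimal CMC surface, so Theorem~\ref{zeropoletheorem} was automatic. Here $(a,Q)=(2\mu,-2\mu\nu_z)$ arise from minimal Weierstrass data, and the paper verifies directly that the regularity conditions on $(\mu,\nu)$ --- namely $\Ord(\mu)=-2\Ord(\nu)\geq 0$ at any zero of $\mu$ --- translate into the vanishing-order condition $\Ord(Q)=(\Ord(a)+2)/2-2$ of Theorem~\ref{zeropoletheorem}. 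You should include this check.
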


\begin{proof}
 To see that the CMC-$h$ surface corresponding
to $\hat \eta_h$ is smooth we can use the conditions in Theorem \ref{zeropoletheorem}:  here $a = 2\mu$ and $Q = -2\mu \nu_z$.
By the assumptions on $\mu$ and $\nu$, the function $a$ has no poles,
and  $a$ has a zero if and only if it is  of order $n=2k$ and the function 
$\nu$ has a zero of order $k$.  In this case, we have
 $\Ord(Q)=k-1 = \frac{n+2}{2} - 2$, which satisfies the second condition 
of Theorem \ref{zeropoletheorem}, with $r=1$. \\

Finally, we must show that $f_0 = f$.  We know from Theorem \ref{thm1} that $f_0\big|_{\Sigma^\circ}$ is an immersed minimal surface, and from
 equation (\ref{f0metric}) the metric is given by 
$e^u = \frac{1}{2} |f_x| = \frac{1}{2}(1+|g|^2)|a|$. 
In the present situation, $a=2\mu$ and $g=q=-\nu$, and
so we have 
\bdm
e^u = |\mu|(1+|\nu|^2).
\edm
This is the same as the formula at (\ref{metricandhd}) for 
the metric of $f$, and so $f$ and $f_0$ have the same metric on 
$\Sigma^\circ$. They also have the same Gauss map and Hopf differential.  Hence
they are the same surface up to an isometry. Finally, it follows from the choice of 
initial condition $F(z_0)$ for both surfaces, that both maps satisfy
$f(z_0)=0$ and $f_z(z_0) = e_1 - ie_2$.  Hence $f\big|_{\Sigma^\circ}=f\big|_{\Sigma^\circ}$.
Since both maps are real analytic, and $\Sigma^\circ$ is dense, they are identical. \\
\end{proof}

We can now finish the proof of Theorem \ref{mainthm}:
\begin{proof} 
The first item is Theorem \ref{thm2}.  The second item follows from 
Theorem \ref{thm1} for $h \neq 0$ and, for $h=0$, from the converse part of Proposition \ref{minimalprop}:   suppose that
conditions (a) and (b) of Theorem \ref{mainthm} are satisfied. 
  Since $a(z_0)$ is assumed real, the constant 
  $A_0$ is equal to $1$. 
   We first need to check that conditions (1) and (2) of the proposition are satisfied.
   Here $p=Q/a$, and it follows from the conditions on the vanishing
orders of $Q$ and $a$ in Theorem \ref{zeropoletheorem} that this cannot have a pole of
order $1$, giving condition (1).  Condition (\ref{ass2}) 
  is equivalent to our assumptions \ref{conda} and \ref{condb} on the orders
 of vanishing of $Q$ and $a$.  
Thus, $f_0$ is a, possibly branched, minimal immersion, with the given
Weierstrass data $\mu$ and $\nu$. 
Finally, the regularity conditions (i)  and (ii) of 
Proposition \ref{minimalprop} are equivalent to condition (b) of the theorem,
and so $f_0: \Sigma^* \to \E^3$ is immersed. \\
 \end{proof}

\begin{remark}
If the classical Weierstrass data $\mu$ and $\nu$ are not given such that 
$\mu(z_0) =1$ and $\nu(z_0)=0$:   using the  general
formulae for $Q$ and $p$ given at (\ref{Qandp}), one obtains in the place of
(\ref{simpleetah}) the formula:
\beq \label{generaleta}
\hat \eta_h = \bbar 0 & -h \mu \Gamma_0 (\bar \nu_0 \nu +1)^2 \vspace{1ex} \\
             -\dfrac{\nu_z}{\Gamma_0 (\bar \nu_0 \nu +1)^2} & 0 \ebar \lambda^{-1} \dd z, \quad
            \Gamma _0:= 
          \dfrac{\bar \mu_0 }{|\mu_0| (|\nu_0|^2+1)},
\eeq
where $\nu_0 = \nu(z_0)$ and $\mu_0=(\mu(z_0))$.\\
\end{remark}
   
\begin{remark}   \label{pcremark}
Considering the expressions at (\ref{fundamentalforms}) for the first and second fundamental forms, the fact that the Hopf differential is the same for every $h$
shows that not only umbilics, but also principal curves, are  preserved under the deformation.  The Weingarten
matrix for $f_H$ is
\bdm
W_H = I^{-1} II = \bbar H + \frac{1}{2}e^{-2u} \, \Re Q   & -\frac{1}{2}e^{-2u} \, \Im Q \\
              -\frac{1}{2}e^{-2u} \,\Im Q & H - \frac{1}{2}e^{-2u} \,\Re Q \ebar.
\edm
At an umbilic point there are no principal directions. In a neighbourhood of a point which is not umbilic we can always assume that coordinates are chosen so that $Q$ is real; that is, the coordinates are isothermic (conformal principal coordinates) and
the Weingarten matrix is diagonal. Since $Q$ is constant with respect to $h$ under the deformation, these coordinates are also isothermic for $f_h$,
for every $h$.  \\
\end{remark}

\begin{remark}  \label{pcremark2}
The expression for the Weingarten matrix $W_h$ also shows what happens geometrically around the basepoint $z_0$ as $h$ varies.  Assuming $Q$ is real, the principal curvatures are the eigenvalues:
$\kappa_\pm = h \pm \frac{1}{2}e^{-2u} Q$.  
Comparing the coordinate frame $\hat F_{C,h}$ with the  meromorphic frame
$\hat \Phi_h$, and using their relationship via the Iwasawa decomposition $\hat \Phi_h = \hat F \hat B_+$, where $\hat B_+(z_0) = I$, we have that 
$e^{u(z_0)} = a(z_0)/2$, independent of $h$.  Thus the principal curvatures for $f_h$ at the basepoint  are given by
\bdm
\kappa _\pm (z_0) =  \pm C_0 + h, \quad \quad C_0 := \frac{1}{2}e^{-2u(z_0)} Q(z_0).
\edm
Assuming the basepoint is not umbilic so that we can take $C_0>0$,
we see that the surface is negatively curved at the basepoint for $h \in (-C_0, C_0)$, flat at $z_0$ for $h=\pm C_0$ and positively curved for $|h|>C_0$. As
$h$ grows large, the surface becomes spherical around the point $z_0$.

\end{remark}

\section{Examples}
\subsection{Non-minimal CMC surfaces associated to well-known minimal surfaces}
As mentioned in the introduction, Theorem \ref{mainthm} gives a means to define non-minimal CMC surfaces from known minimal surfaces. In general one has the question of
which basepoint to choose, as different choices will result in different non-minimal surfaces.  We consider here some examples where there is a canonical choice of basepoint: we will show in the next section that if a CMC  surface $f_H$ has a reflective symmetry with respect to a plane in $\real^3$ then, if the basepoint is chosen to be some point on the intersection of this plane with the surface, the associated surfaces $f_h$, for $h\in\real$ also have the same symmetry.  A similar statement holds for finite order rotational symmetries about some axis.   \\

In the examples below, the basepoints are chosen for the following reasons: for the sphere, all points are the same.  For the catenoid, the circle of smallest radius lies in a plane of symmetry of the surface - choosing a basepoint on this circle will result in non-minimal CMC surfaces with the same planar symmetry, and all points on the circle are the same geometrically.  For the helicoid, any point on the central axis is geometrically the same, and such a point is a natural choice.   Enneper's surface of order $k$ has a finite order rotational symmetry about an axis through the point $z_0=0$, and so this is a natural choice of basepoint to produce CMC surfaces with the same symmetry.

\begin{example}\label{planeexample}
The simplest minimal surface is the plane, which has Weierstrass data $\mu =\mu_0$,
$\nu=\nu_0$, where $\mu_0$ and $\nu_0$ are non-zero constants.
Using the formula (\ref{generaleta}), we obtain the potential
\bdm
\hat \eta = \bbar 0 & -h |\mu_0|(1+|\nu_0|^2) \\ 0 & 0 \ebar \lambda^{-1} \dd z.
\edm
This is the potential for a once-punctured round sphere of radius $1/h$. Note that in this
example $f_0$ is complete, but $f_h$ is not complete for $h \neq 0$.  More precisely, $f_0$ has a
planar end as $|z| \to \infty$, whilst $f_h$ has a finite limit.\\
\end{example}


\begin{figure}[ht]
\centering
$
\begin{array}{ccc}
\includegraphics[height=25mm]{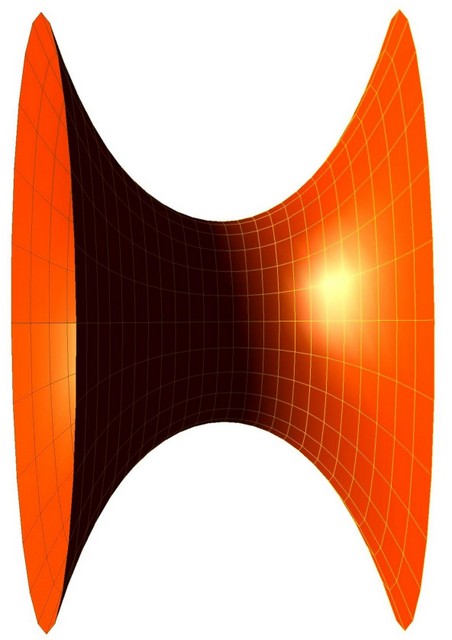} \quad &  \quad
\includegraphics[height=25mm]{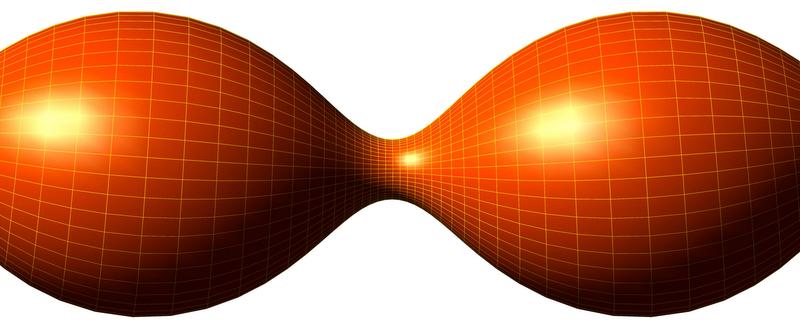} \quad &  \quad
 \includegraphics[height=25mm]{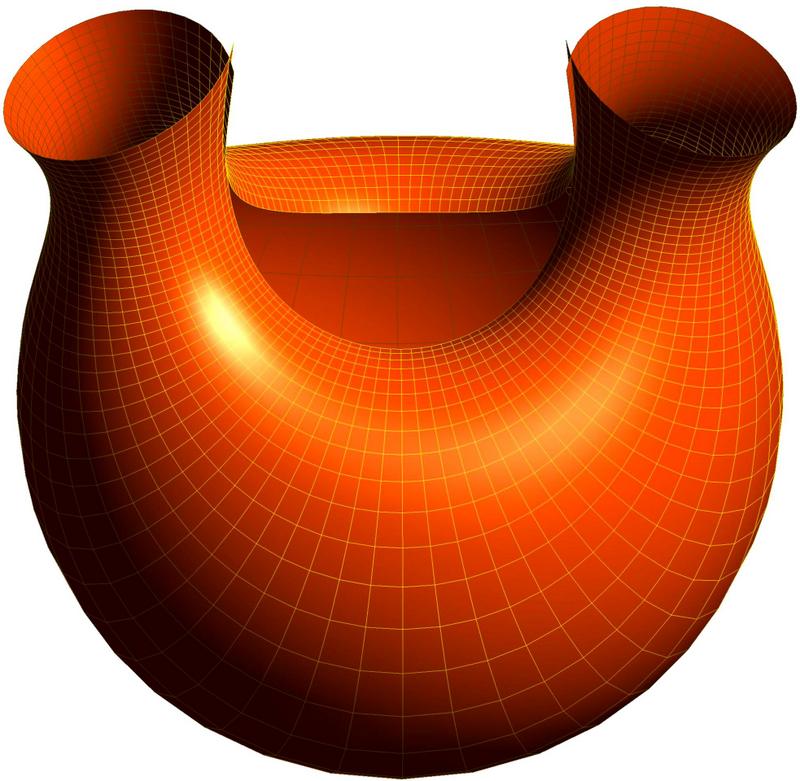} \\
 H=10^{-10} & 
 H=0.1 & H=10   \vspace{1ex}\\
\end{array}
$
\caption{Various CMC $H$ surfaces in  the catenoid family, with basepoint on the catenoid chosen on the parallel of smallest radius.} 
\label{figurecat}
\end{figure}



\begin{example}\label{catexample}
\emph{The Catenoid:} taking the Weierstrass data $\mu = -e^{-z}/2$ and $\nu = -e^z$ on $\C^2$ gives a covering of the catenoid.   We compute the potentials for the associated non-minimal CMC surfaces.  The function $\nu$ is never zero, so we use the formula
(\ref{generaleta}) with basepoint $z_0=0$.   Here $\nu_0 = -1$, $\mu_0 = -1/2$ and the formula
(\ref{generaleta}) gives
\bdm
\hat \eta = \bbar 0 & -h\frac{1}{4} e^{-z}(e^z+1)^2 \\ -2\, e^z(e^z+1)^{-2} & 0 \ebar \lambda^{-1} \dd z,  \quad \quad z_0 = 0.\\
\edm
Some examples are plotted in Figure \ref{figurecat}.  The surface with $H=0.1$ looks rather like an unduloid, but it does not close up.  The surface, around the basepoint $z_0$, shrinks to a tiny sphere as $H$ grows large. 
\end{example}

\begin{example}\label{helexample}
\emph{The Helicoid:} The helicoid is obtained by multiplying $\mu$ by $i$ in the data for
the catenoid.  Thus the potentials for the associated surfaces are
\bdm
\hat \eta = \bbar 0 & \frac{ih}{4}e^{-z}(e^z+1)^2 \\ 2e^z(e^z+1)^{-2} & 0 \ebar \lambda^{-1} \dd z,  \quad \quad z_0 = 0. \\
\edm
Plots for different values of $H$ are shown in Figure \ref{figurehel}.
Three plots of the "almost minimal" surface with $H=0.001$ are shown in
 Figure \ref{figurehel2}.
  On the large scale it looks like a chain of spheres, although it does not close up. The  "sphere" shown is of radius $1000$.  The third image is at the center of the second image at close range.

\end{example}


\begin{figure}[ht]
\centering
$
\begin{array}{ccc}
\includegraphics[height=25mm]{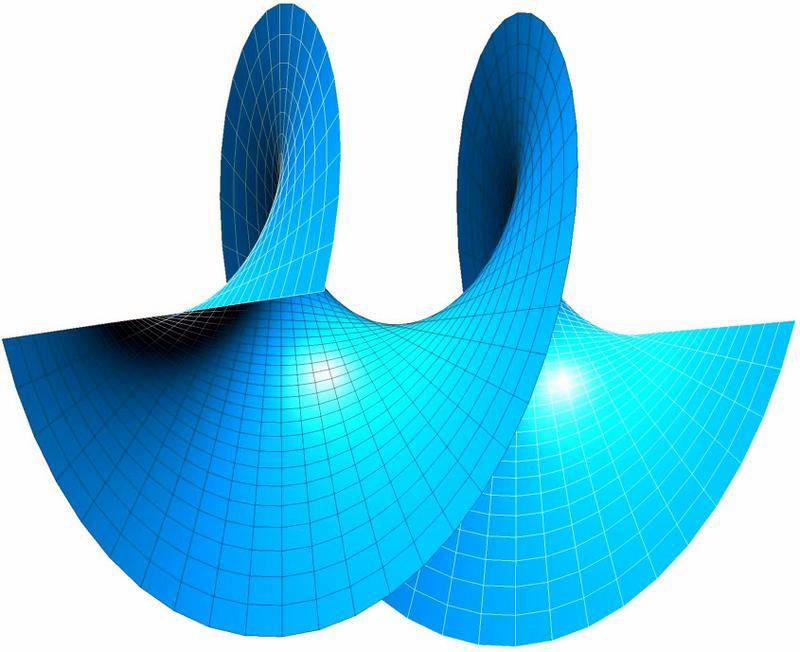} \quad & \quad
\includegraphics[height=25mm]{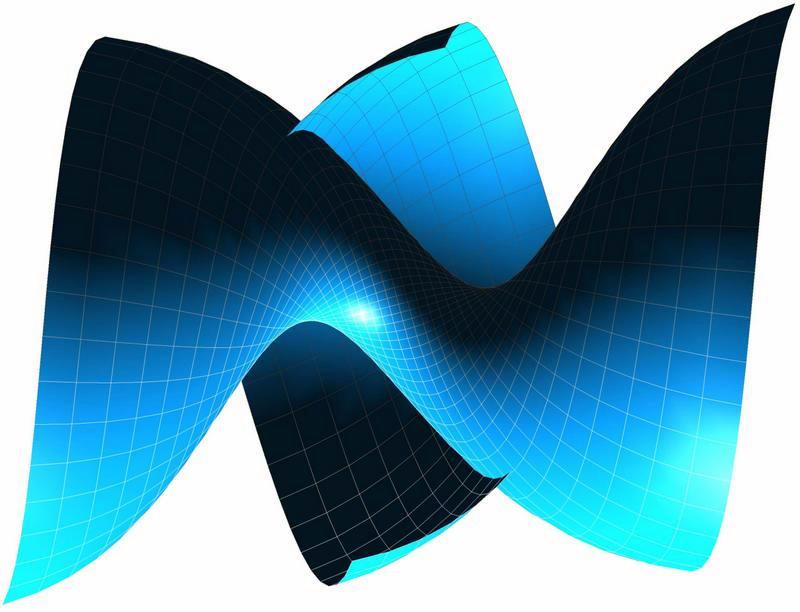}  \quad & \quad
\includegraphics[height=25mm]{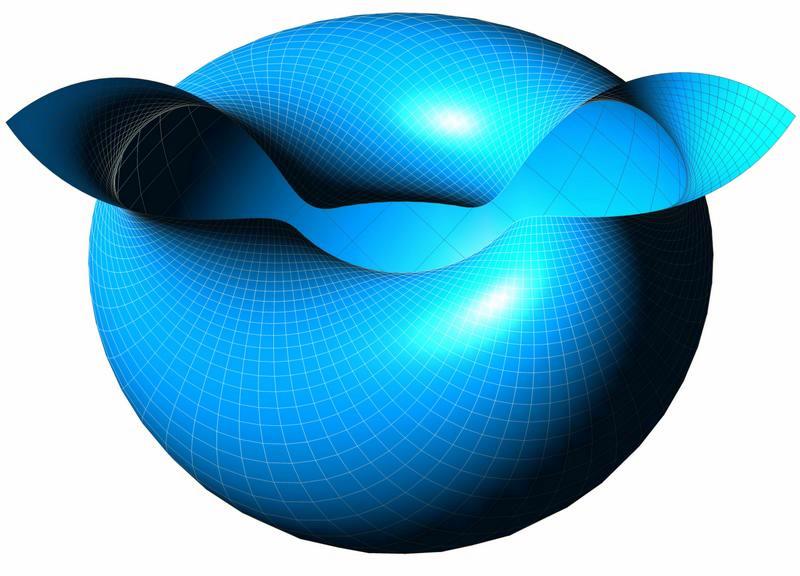}   

 \vspace{2ex} \\
 H=10^{-10}, & 
 H=0.1   &    H=5
\end{array}
$
\caption{CMC $H$ surfaces in the helicoid family with basepoint chosen on the central axis.} 
\label{figurehel}
\end{figure}

\begin{figure}[ht]
\centering$
\begin{array}{ccc}
\includegraphics[height=30mm]{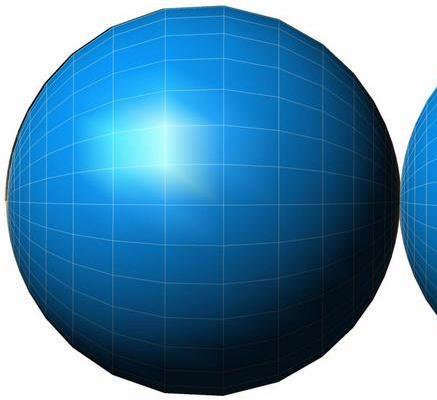} \quad & \quad
 \includegraphics[height=30mm]{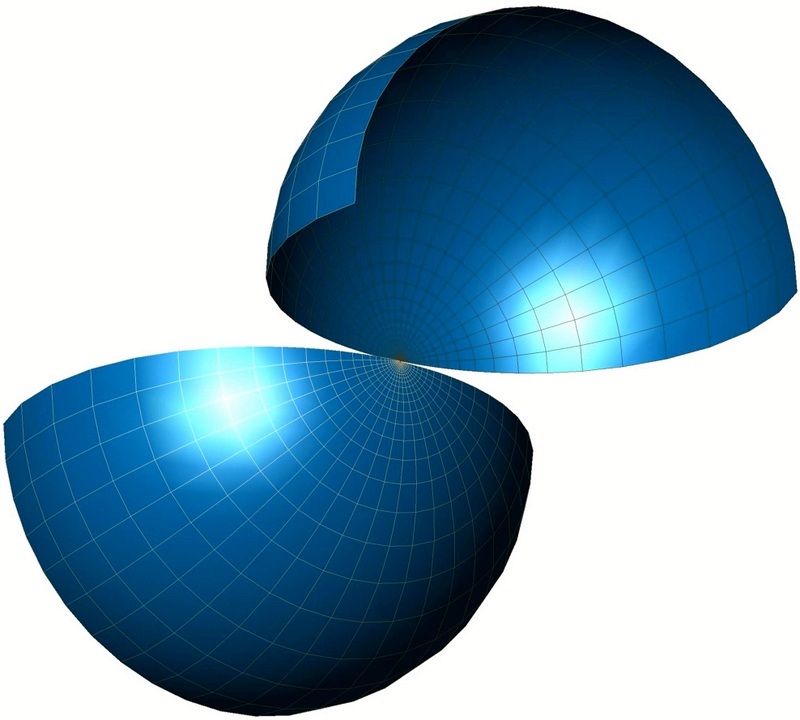} \quad & \quad
\includegraphics[height=30mm]{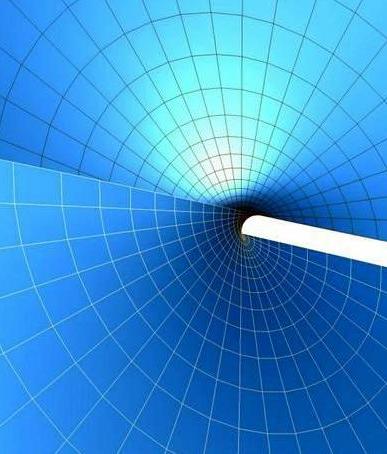} 
 \end{array}
$
\caption{Several plots of the helicoidal surface of CMC $H=0.001$.   } 
\label{figurehel2}
\end{figure}


\begin{example} \label{enneperexample}
\emph{Enneper's surface:} Enneper's surface of order $k \geq 1\emph{}$ is given by 
 $\mu = 1$, $\nu = z^k$
on $\C$.  This gives associated CMC-$h$ surfaces with potentials
\bdm
\hat \eta = \bbar 0 & h \\ kz^{k-1} & 0 \ebar \lambda^{-1} \dd z,  \quad \quad z_0 = 0.
\edm
These potentials are known: for the case $k=1$ and $h=1$ we obtain a round cylinder.
However the surfaces do not close up in general.
The other cases are known as  Smyth surfaces -- defined by B. Smyth in \cite{smyth}) -- or  $(k+1)$-legged Mister Bubbles. See Figure \ref{figuresmyth}.\\
\end{example}

\section{The dressing action}   \label{dressingsection}
The dressing action is an action by $\Lambda_P ^+ G^\C_\sigma$ on the space of CMC
immersions from a simply connected domain $\Sigma \subset \C$ into $\E^3$. A description of the action can be found in \cite{wu1997}.  The action is defined as follows:  for a loop $h_+ \in  \Lambda _P ^+ G^\C_\sigma$, and a CMC immersion
with extended frame $\hat F$, the pointwise Iwasawa decomposition
$$
h_+ \hat F(z) = {\widetilde F}(z) {\widetilde G}_+(z), \quad \quad
  \widetilde F(z) \in \Lambda G_\sigma, \quad \widetilde G_+(z) \in \Lambda _P^+ G^\C_\sigma,
$$
gives an extended frame $\widetilde F$ for a new CMC surface.  Note that this can be equivalently
defined by the Iwasawa decomposition  $h_+ \hat \Phi(z) = \widetilde F(z) \widetilde C_+(z)$, where $\hat \Phi$ is a meromorphic extended frame for $f$, to get the same 
extended frame $\widetilde F$.\\

It is  a question of interest whether two CMC surfaces are in the same dressing orbit: for example all CMC tori are in the dressing orbit of the cylinder
\cite{dorfwu1993}.\\

Note that if $\tilde f= (h_+)^\# f$ is obtained from $f$ by dressing by $h_+$,
and $\widetilde \Phi$ and $\hat \Phi$ are their respective normalized meromorphic frames,
with basepoint $z_0$, then writing the normalized Birkhoff decompositions
$\hat F = \hat \Phi \hat B_+$ and
$\widetilde F = \widetilde \Phi \widetilde B_+$, and substituting in the above relation
$\widetilde F = h_+ \hat F \widetilde B_+^{-1}$, we obtain the relation
\bdm
\widetilde \Phi = h_+ \hat \Phi \hat W_+, \quad \quad 
 \hat W_+(z) \in \Lambda^+ _P G^\C_\sigma, \quad \hat W_+(z_0) = h_+^{-1}.
\edm
where $\hat W_+ = \hat B_+ \widetilde G_+^{-1} \widetilde B_+^{-1}$.
If we denote by $\Sigma^*$ the open dense set on which both $\widetilde \Phi$ and
$\hat \Phi$ have no poles, then the above formula shows that $\hat W_+: \Sigma^* \to \Lambda^+ G^\C_\sigma$ is holomorphic.  \\

Conversely, given $f$ and $\hat \Phi$ as above, any holomorphic map $\hat W_+$
from a neighbourhood $U$ of $z_0$ into $\Lambda^+ _P G^\C_\sigma$  
corresponds to a dressing by the element $h_+ = \hat W_+(z_0)^{-1}$.
The new solution has meromorphic frame 
$\widetilde \Phi = \hat W_+(z_0)^{-1} \hat \Phi \hat W_+$.
At the level of potentials, the new surface has the potential
\beq  \label{dressedpotential}
 \hat \eta \# \hat W_+ := \hat W_+^{-1} \hat \eta \hat W_+ + \hat W_+^{-1} \dd \hat W_+.
\eeq
Thus if $\hat \eta_H = \bbar 0 & -\frac{H}{2}a \\ \frac{Q}{a} & 0 \ebar \lambda^{-1} \dd z$, we have
\bdm
\widetilde \eta_H = \hat \eta \# \hat W_+
= \bbar 0 & -\frac{H}{2} a \rho^2 \\ \frac{Q}{a \rho^2} & 0 \ebar \lambda^{-1} \dd z, \quad \quad 
\hat W_+ = \bbar \rho^{-1} & 0 \\ 0 & \rho \ebar + o(\lambda).
\edm
In other words the data $(a(z), Q(z))$ are dressed to 
$(\rho^2(z) a(z), Q(z))$, where $\rho$ is some meromorphic function. In particular the Hopf differential is the same for both surfaces.\\

We say that $\hat \eta$ and $\widetilde \eta$ are \emph{dressing equivalent}
if (\ref{dressedpotential}) holds for some meromorphic function $\hat W_+$,
and \emph{formally} dressing equivalent if the relation holds for 
some formal power series $\hat W_+$. \\

If we take the potentials $\hat \eta_H$ and $\widetilde \eta_H$ above, and
let $H$ vary, then, for $\hat \eta_h$ and $\widetilde \eta_h$ to be dressing equivalent, the gauge $\hat W_+$ will also depend on $h$.  Thus, even if
$\hat \eta_H$ is dressing equivalent to $\widetilde \eta_H$ at some value $H$ 
of
$h$, it is not automatic
that the whole families of surfaces are dressing equivalent.

\begin{theorem}   \label{dressingthm1}
 Let  $\hat \eta_h$ and $\widetilde \eta_h$ be two normalized meromorphic potentials
 with meromorphic data $(a, Q)$ and $(\tilde a, Q)$, and let $z_0$ be the associated
 basepoint.  Suppose that either $Q(z_0) \neq 0$ or  $z_0$ is a simple root of $Q$. Then:
 \begin{enumerate}
 \item  \label{dressingthmitem1}
The potentials  $\hat \eta_h$ and $\widetilde \eta_h$ are formally dressing equivalent for
 every $h\neq 0$.  The gauge $\hat W_{+,h}$ can be computed locally around $z_0$.
\item   \label{dressingthmitem2}
The map $\hat W_{+,h}$ extends to $h=0$ if and only if it is constant in $h$ and
 has the form
\beqa
\hat W_+= \bbar a_0 & b_1 \lambda \\ 0 & a_0^{-1} \ebar,
      \label{Wplusformula} \\
a_0 = \sqrt{\frac{a}{\tilde a}}, \quad \quad  b_1 = \frac{\tilde a}{Q} \frac{\dd }{\dd z}\sqrt{\frac{a}{\tilde a}}, \quad \textup{where } \frac{\dd b_1}{\dd z}=0.  \label{a0b1formulae}
\eeqa
\end{enumerate}
\end{theorem}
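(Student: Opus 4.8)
The plan is to turn the dressing equation into a recursion in powers of $\lambda$ and solve it order by order. Write $\hat\eta_h = A_h\lambda^{-1}\dd z$ and $\widetilde\eta_h = \widetilde A_h\lambda^{-1}\dd z$, where $A_h = \bbar 0 & -\frac{h}{2}a \\ \frac{Q}{a} & 0\ebar$ and $\widetilde A_h = \bbar 0 & -\frac{h}{2}\tilde a \\ \frac{Q}{\tilde a} & 0\ebar$. By the definition $\hat\eta_h \# \hat W_+ = \hat W_+^{-1}\hat\eta_h\hat W_+ + \hat W_+^{-1}\dd\hat W_+$, the relation $\hat\eta_h \# \hat W_{+,h} = \widetilde\eta_h$ is equivalent to the linear system $\partial_z\hat W_{+,h} = (\hat W_{+,h}\widetilde A_h - A_h\hat W_{+,h})\lambda^{-1}$. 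Expanding $\hat W_{+,h} = \sum_{k\ge 0}W_k\lambda^k$ — with $W_0, W_2,\dots$ diagonal and $W_1, W_3,\dots$ off-diagonal because of the $\sigma$-twisting — and matching powers of $\lambda$ gives the algebraic constraint $W_0\widetilde A_h = A_h W_0$ at order $\lambda^{-1}$, and the differential constraints $\partial_z W_k = T_h(W_{k+1})$ for $k\ge 0$, where $T_h(X) := X\widetilde A_h - A_h X$ interchanges the diagonal and off-diagonal subspaces of $\mathfrak{sl}(2,\C)$.

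For item (\ref{dressingthmitem1}) I would carry out this recursion near $z_0$. The $\lambda^{-1}$-equation together with $\det\hat W_{+,h} = 1$ forces $W_0 = \mathrm{diag}(a_0, a_0^{-1})$ with $a_0^2 = a/\tilde a$; after choosing a branch this is a well-defined meromorphic germ at $z_0$, and the hypothesis that $Q(z_0)\neq 0$ or $z_0$ is a simple root of $Q$ guarantees that $a/\tilde a$ and the functions $1/Q$, $1/a$, $1/\tilde a$ that appear at later steps have at worst a simple pole at $z_0$. At each subsequent order $T_h$ has a one-dimensional kernel and cokernel (varying holomorphically in $z$ where $Q\neq 0$): solvability of $\partial_z W_k = T_h(W_{k+1})$ imposes a cokernel condition, which at order $\lambda^0$ is the identity $\tilde a\,\partial_z a_0 + a\,\partial_z(a_0^{-1}) \equiv 0$ and holds automatically because $a_0^2 = a/\tilde a$, and which at each higher order becomes a first-order linear ODE that pins down the one free (kernel-direction) function left undetermined at the previous step; one then solves for $W_{k+1}$ up to its kernel component, which is carried forward. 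This yields a formal power series $\hat W_{+,h}$, with coefficients holomorphic in $z$ near $z_0$, solving the dressing equation — hence formal dressing equivalence for every $h\neq 0$.

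For the "if" direction of item (\ref{dressingthmitem2}), take $\hat W_+ = \bbar a_0 & b_1\lambda \\ 0 & a_0^{-1}\ebar$ with $a_0 = \sqrt{a/\tilde a}$, $b_1 = \frac{\tilde a}{Q}\frac{\dd}{\dd z}\sqrt{a/\tilde a}$ and $b_1' = 0$, and compute $\hat\eta_h \# \hat W_+$ directly. The $\lambda^{-1}$-part equals $\bbar 0 & -\frac{h}{2}a\,a_0^{-2} \\ \frac{Q}{a}a_0^2 & 0\ebar\dd z$, which is $\widetilde A_h\,\dd z$ precisely because $a_0^2 = a/\tilde a$; the diagonal $\lambda^0$-part vanishes precisely because $a_0^{-1}a_0' = b_1\frac{Q}{a}a_0$, i.e. because $b_1 = \frac{\tilde a}{Q}a_0'$; and the off-diagonal $\lambda^1$-part equals $b_1\bigl(a_0^{-2}a_0' - b_1\frac{Q}{a}\bigr) + a_0^{-1}b_1'$, which vanishes by the same identity together with $b_1' = 0$. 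Hence this $\hat W_+$, which involves no $h$ and is polynomial in $\lambda$, is a valid dressing gauge from $\hat\eta_h$ to $\widetilde\eta_h$ for all $h\in\real$, so in particular it extends to $h = 0$.

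For the "only if" direction, suppose $\hat W_{+,h}$ has a limit $\hat W_{+,0}$ as $h\to 0$; then $\hat W_{+,0}$ solves the $h = 0$ equation, with $\hat\eta_0, \widetilde\eta_0$ strictly lower-triangular. Running the recursion at $h = 0$: the $\lambda^0$-equation forces $(W_1)_{12} = \frac{\tilde a}{Q}a_0'$ and leaves $(W_1)_{21}$ undetermined; the $\lambda^1$-equation then forces $\partial_z(W_1)_{12} = 0$, i.e. $b_1' = 0$, and expresses $W_2$ through $(W_1)_{21}$; imposing $\det\hat W_{+,0} = 1$ (whose $\lambda^2$-coefficient reads $a_0(W_2)_{22} + a_0^{-1}(W_2)_{11} = (W_1)_{12}(W_1)_{21}$) together with the normalization $\hat W_{+,0}(z_0) = h_{+,0}^{-1}$ (a constant loop) and iterating the recursion forces, by induction on Fourier order, $(W_1)_{21} = 0$ and $W_k = 0$ for all $k\ge 2$. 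Thus $\hat W_{+,0}$ is exactly the matrix (\ref{Wplusformula}); since that matrix is $h$-independent and, by the "if" direction, a valid gauge for every $h$, uniqueness of the dressing gauge with the prescribed value at $z_0$ gives $\hat W_{+,h} = \hat W_{+,0}$ for all $h$, i.e. it is constant in $h$. The main obstacle is twofold: verifying in item (\ref{dressingthmitem1}) that the cokernel obstruction genuinely vanishes at every order of the recursion while the ODEs stay regular at the possibly degenerate base point $z_0$; and carrying through in item (\ref{dressingthmitem2}) the rigidity argument by which strict lower-triangularity of $\hat\eta_0, \widetilde\eta_0$, together with $\det = 1$ and the base-point normalization, collapses $\hat W_{+,0}$ to the explicit finite expression (\ref{Wplusformula}). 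Both are where the hypothesis on $Q(z_0)$ is essential.
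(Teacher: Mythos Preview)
Your setup of the recursion $\partial_z W_k = T_h(W_{k+1})$ is correct, and your treatments of item~(\ref{dressingthmitem1}) and of the ``if'' direction of item~(\ref{dressingthmitem2}) are fine; they amount to a re-derivation of what the paper obtains by citing Wu's recursion (equations (\ref{dressing1})--(\ref{dressing6})) and Theorem 5.17 of \cite{wu1997}.

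The gap is in the ``only if'' direction. Your strategy is to pass to the limit $\hat W_{+,0}$ and then run the recursion \emph{at} $h=0$, hoping that strict lower-triangularity of $A_0,\widetilde A_0$ together with $\det=1$ and a basepoint normalization forces $(W_1)_{21}=0$ and $W_k=0$ for $k\ge 2$. This rigidity does not hold. At $h=0$ the operator $T_0$ restricted to off-diagonal matrices has the entire $(21)$-direction in its kernel, so the $\lambda^0$-equation leaves $c_1:=(W_1)_{21}$ completely free, and the $\lambda^1$-equation only relates $c_1'$ to a combination of $a_2,d_2$ rather than constraining $c_1$ itself. One can solve the full $h=0$ recursion consistently with $c_1$ an arbitrary (nonzero) function; the determinant condition just fixes $a_2,d_2$ in terms of $c_1,c_1'$. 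So the $h=0$ equations alone cannot collapse $\hat W_{+,0}$ to the form (\ref{Wplusformula}), and your final uniqueness step is circular because it presupposes exactly this collapse.

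The mechanism the paper uses is different and is what you are missing: one stays with the $h\neq 0$ recursion, where Wu's explicit formulae contain $1/h$ factors. Equation (\ref{dressing3}) reads $c_n=\tfrac{2}{h}\bigl(-\tfrac{b_nQ}{a\tilde a}+\tfrac{a_{n-1}'}{a}\bigr)$; by induction the bracket is independent of $h$, so the only way $c_n$ can extend to $h=0$ is for the bracket to vanish identically, which forces both $c_n=0$ and the formula $b_n=a_{n-1}'\tilde a/Q$. Likewise the $b_{n-1}'/h$ terms in (\ref{dressing4})--(\ref{dressing6}) force $b_1'=0$, $a_2=d_2=0$, and then $b_n=a_n=d_n=0$ for higher $n$. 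The point is that the vanishing is forced by the singular dependence of the $h\neq 0$ recursion on $h$, information which is lost once you first set $h=0$.
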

\begin{proof}
  Wu \cite{wu1997} has studied the general problem of dressing two normalized potentials $\hat \eta$ and $\widetilde \eta$ into each other.  Formally, it is
enough to solve the equations (5.11)-(5.16) in \cite{wu1997} for the map $\hat W_+$.
(The symbols $E$, $p$ and $q$ used in \cite{wu1997} corresponds to ours via $E = -\frac{h}{2} Q$,
$p = -\frac{h}{2} a$, and $q= -\frac{h}{2} \tilde a$.)
 Writing 
$$\hat W_{+,h}(z) = \bbar \sum_{k=0}^\infty  \,  a_{2k}(z) \,  \lambda^{2k}  &
         \sum_{k=0}^\infty  \,  b_{2k+1}(z) \,  \lambda^{2k+1} \\
          \sum_{k=0}^\infty   \, c_{2k+1}(z) \,  \lambda^{2k+1}  &
          \sum_{k=0}^\infty  \,  d_{2k}(z) \,  \lambda^{2k}  \ebar,
$$
 the equations (also correcting two typographic errors  in \cite{wu1997})  to be solved are:
\beqa
a_0  = d_0^{-1}  =\sqrt{\frac{a}{\tilde a}},  \label{dressing1} \\
2b^\prime_n + b_n \left(Q^\prime - \left(\frac{a^\prime}{a} + \frac{\tilde a^\prime}{\tilde a}\right) Q\right) = \left( a_{n-1}^{\prime \prime} - a_{n-1}^\prime \frac{a^\prime}{a}\right) \tilde a, \quad \quad (n\geq 1),
         \label{dressing2} \\
c_n = \frac{2}{h}\left( -\frac{b_n Q}{ a \tilde a} + \frac{ a_{n-1}^\prime}{ a} \right), 
      \quad \quad (n \geq 1),
           \label{dressing3}\\     
 a_2 = \frac{1}{2}a_0 b_1 c_1 - \frac{b_1^\prime}{h \tilde a}, 
    \quad \quad
  d_2 = \frac{1}{2}d_0 b_1 c_1 + \frac{b_1^\prime}{h a},           
           \label{dressing4}\\
\quad a_n= \frac{1}{2}a_0 \sum_{j=0}^{n/2-1} b_{2j+1} c_{n-2j-1} 
       - \frac{1}{2}a_0 \sum_{j=1}^{n/2-1} a_{2j}d_{n-2j} - \frac{b^\prime_{n-1}}{h\tilde a}, \quad (n \geq 4),
                \label{dressing5}\\
\quad d_n= \frac{1}{2}d_0 \sum_{j=0}^{n/2-1} b_{2j+1} c_{n-2j-1} 
       - \frac{1}{2}d_0 \sum_{j=1}^{n/2-1} a_{2j}d_{n-2j} - \frac{b^\prime_{n-1}}{h a}, \quad (n \geq 4).  \label{dressing6}  
\eeqa
As discussed in \cite{wu1997}, Theorem 5.17, there is always a solution $\hat W_{+,h}$ 
for any $h \neq 0$, which takes care of item \ref{dressingthmitem1} of this theorem. \\

We now consider item \ref{dressingthmitem2}.   Assume  that the solution extends to $h=0$.
We argue by induction on $n$ that: 
\beqas
a_n \textup{ is independent of $h$ for all $n$ },
\quad a_n =d_n=0 \quad \textup{for }  n>0,\\
c_n=0 \quad \textup{for all } n, \quad \quad
b_n=0 \quad \textup{for } n>1.
\eeqas

 Clearly, our hypothesis holds for $n=0$, by equation (\ref{dressing1}).
Assume now that the hypothesis holds for $n-1$, and consider $n$. If $n$ is odd,
we need to consider only $b_{n}$ and $c_n$.  By equation (\ref{dressing2}),
$b_{n}$ is independent of $h$, because the right hand side of the equation has this property (the inductive hypothesis).  But then the expression
$-\frac{2b_n Q}{ a \tilde a} + \frac{2 a_{n-1}^\prime}{ a}$
 on the  right hand side of 
equation (\ref{dressing3}) is also independent of $h$, and therefore, 
if $c_n$ is defined for $h=0$, the equation (\ref{dressing3}) becomes
\beq \label{bnformula}
c_n=0 \quad \textup{and} \quad  
b_n = \frac{a_{n-1}^\prime \tilde a}{Q}.
\eeq
If $n=1$ we obtain the formula at (\ref{a0b1formulae}) for $b_1$, and
if $n\geq 3$ we obtain, using the inductive hypothesis on $a_{n-1}$,
that $b_n=0$. 
That deals with the odd case.  \\

Now if $n$ is even, we are considering
$a_{n}$ and $d_{n}$.  We first discuss $a_n$. If $n=2$ we have,
from (\ref{dressing4}), that
 $a_2 = \frac{1}{2}a_0 b_1 c_1 - \frac{b_1^\prime}{h \tilde a}$.
We know that $c_1=0$, and that $b_1^\prime/\tilde a$ is independent of
$h$. Hence we must have
\beq \label{b1primeformula}
b_1^\prime= a_2=0.
\eeq
If $n\geq 4$, we use equation (\ref{dressing5}), which reduces,
by the inductive hypothesis to
$$
a_n= 0.
$$
The argument for $d_n$ is identical, using (\ref{dressing6}).

Thus we have proved the formula (\ref{Wplusformula}) for 
$\hat W_+$ by induction.
The formulae at (\ref{a0b1formulae}) are given at
(\ref{dressing1}), (\ref{bnformula}) and (\ref{b1primeformula}). 
This proves the "only if" direction of item \ref{dressingthmitem2},
and the "if" direction is just the observation that 
the stated conditions for $\hat W_+$ at (\ref{a0b1formulae}) do give a
solution to Wu's equations (\ref{dressing1})-(\ref{dressing6}).
\end{proof}

\begin{remark} 
Note that, again using Theorem 5.17 of \cite{wu1997}, an analogous
statement to Theorem \ref{dressingthm1} holds for the case that
$Q$ has an umbilic of order greater or equal to $2$.  The only difference
is that, in addition to the conditions at (\ref{a0b1formulae}),
one also requires all the dressing invariants (of which there are many
for higher order umbilics) must be satisfied by both potentials.
The dressing invariants are independent of $h$ (see Lemma 3.2 of
\cite{wu1997}), and so there is no further argument needed.
\end{remark}

\subsection{Dressing minimal surfaces}
The dressing action on minimal surfaces, defined in \cite{dorfmeisterpedittoda}
in the same way as for non-minimal surfaces, via the meromorphic potential, is
an action on equivalence classes of minimal surfaces, all of which have the
same normalized potential.  For the minimal case, there is not a unique minimal
surface related to a normalized potential. Theorem  \ref{dressingthm1} defines  a natural class of dressing elements $h_+ \in \Lambda_P ^+ G^\C_\sigma$, namely given by 
\beq   \label{minimaldressingelement}
h_+ = \hat W_+(z_0)^{-1} = \bbar a_0(z_0)^{-1} & -b_1(z_0)\lambda \\
        0 & a_0(z_0) \ebar,
\eeq
where $\hat W_+$ is as in the theorem, which can be used to 
define dressing as an action on the class of minimal surfaces.  But, 
 in \cite{dorfmeisterpedittoda}, it is also shown (in Lemma 5.1) that 
when dressing normalized potentials for minimal surfaces, one can,
without loss of generality, assume that the dressing element $h_+$
is indeed of the form (\ref{minimaldressingelement}), any other coefficients
being irrelevant.  Thus, what we have really established here is the following:
\begin{theorem} \label{dressingthm2}
Let $\Sigma \subset \C$ be a simply connected domain with basepoint $z_0$.
The dressing action, defined in \cite{dorfmeisterpedittoda} on
normalized potentials of minimal surfaces, gives a well-defined group action
on the space of minimal immersions $f: \Sigma \to \real^3$.
\end{theorem}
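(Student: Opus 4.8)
The plan is to use the canonical CMC family of Theorem~\ref{mainthm} to single out, for each minimal immersion and each dressing element, one well-defined dressed minimal immersion, and then to reduce the group-action axioms to the corresponding known facts for honest CMC surfaces. The key point is that the non-uniqueness which forces the action of \cite{dorfmeisterpedittoda} to be defined only on equivalence classes disappears once one records the Hopf differential alongside the normalized potential: by Proposition~\ref{minimalprop} a minimal immersion $f:\Sigma\to\real^3$ with the fixed basepoint $z_0$ determines a pair of meromorphic data $(a,Q)$ (with $a=Q/p$), and by Theorem~\ref{mainthm} this pair is exactly the data of the canonical family $f_h$ of CMC $h$ surfaces with $f_0=f$. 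For every $h\neq 0$ the surface $f_h$ is conversely determined, up to a translation, by its normalized potential $\hat\eta_h$ together with the initial condition $\hat E_0=I$, and on these CMC $h$ immersions dressing by a loop $h_+\in\Lambda_P^+ G^\C_\sigma$ is the genuine group action of \cite{wu1997}.

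To define the action, given $h_+$ I would first invoke Lemma~5.1 of \cite{dorfmeisterpedittoda}, which permits replacing $h_+$, without changing its effect on normalized potentials of minimal type, by an element of the special form (\ref{minimaldressingelement}). For such $h_+$, item (2) of Theorem~\ref{dressingthm1} tells us that if a dressing gauge $\hat W_{+,h}$ extends to $h=0$ then it is constant in $h$ and of the explicit form (\ref{Wplusformula}); read as a construction (the ``if'' direction, which amounts to solving the first-order ODE for the $\lambda^0$-entry dictated by Wu's equations), it also shows that for $h_+$ of the form (\ref{minimaldressingelement}) such an $h$-independent gauge $\hat W_+$, with $\hat W_+(z_0)=h_+^{-1}$, does exist. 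Its associated dressed potential $\widetilde\eta_h$ then has data $(\tilde a,Q)$ with $\tilde a$ meromorphic and independent of $h$, so $\{\widetilde\eta_h\}$ is again the family of a canonical CMC family in the sense of Theorem~\ref{mainthm}. Hence the dressed CMC $h$ surfaces $\tilde f_h=h_+\#f_h$, $h\neq 0$, extend by Theorem~\ref{thm1} to a distinguished (possibly branched) minimal surface $\tilde f_0$, and I would set $h_+\cdot f:=\tilde f_0$. Since the dressed family keeps $Q$ fixed, its $h=0$ member has Weierstrass data $\tilde\mu=\tilde a/2$ and $\tilde\nu=-\int_{z_0}^z Q(\tau)/\tilde a(\tau)\,\dd\tau$, so its normalized potential is exactly the \cite{dorfmeisterpedittoda}-dressing of that of $f$; thus $\cdot$ refines the known action on equivalence classes.

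It then remains to check that $h_+\cdot f$ is well defined and that $\cdot$ is a group action. For well-definedness, two admissible reductions of $h_+$ to the form (\ref{minimaldressingelement}) give, by Lemma~5.1, the same dressed potentials $\widetilde\eta_h$, hence the same CMC $h$ immersions $\tilde f_h$ for $h\neq 0$ by the rigidity recalled above, hence the same limit $\tilde f_0$; moreover $\tilde f_0$ does not depend on which $h\neq 0$ is used to rebuild the family, since by item (2) of Theorem~\ref{mainthm} the entire family, $h=0$ included, is determined by any single member with $h\neq 0$. The identity loop acts by the identity gauge and fixes each $f_h$, hence fixes $f_0=f$. For the composition law $(k_+h_+)\cdot f=k_+\cdot(h_+\cdot f)$, I would note that the loops of the form (\ref{minimaldressingelement}) are closed under multiplication, so $k_+h_+$ is again of that form; for each fixed $h\neq 0$ the equality $(k_+h_+)\#f_h=k_+\#(h_+\#f_h)$ holds because dressing is a group action on CMC $h$ immersions \cite{wu1997}; and since both sides depend real-analytically on $h$ and agree on the dense set $\real^*$, while the value at $h=0$ of each side is the minimal member of the canonical family determined by its values at $h\neq 0$, the equality persists at $h=0$, which is exactly $(k_+h_+)\cdot f=k_+\cdot(h_+\cdot f)$.

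The main obstacle is the conceptual gap between the two settings: dressing of normalized potentials is intrinsically an operation on the set of minimal surfaces sharing a given potential, so to promote it to an operation on individual immersions one must show that the extra datum separating those surfaces -- the Hopf differential -- is transported both consistently and compatibly with composition. That is precisely what Theorem~\ref{dressingthm1} supplies: the dressing elements acting coherently across the whole CMC family are exactly those of the form (\ref{minimaldressingelement}), and Lemma~5.1 of \cite{dorfmeisterpedittoda} shows that nothing is lost by restricting to them. Granting these two inputs, every remaining step is routine bookkeeping with the DPW method, the Sym-Bobenko formula, and real-analytic continuation in $h$.
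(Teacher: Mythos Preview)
Your proposal is correct and follows essentially the same approach as the paper: reduce the dressing element to the special form via Lemma~5.1 of \cite{dorfmeisterpedittoda}, use Theorem~\ref{dressingthm1} to obtain an $h$-independent gauge, and then invoke Theorem~\ref{mainthm} to pass between minimal immersions and their canonical CMC families. The paper states this as a one-paragraph corollary without spelling out the group-action axioms or the well-definedness check, whereas you supply those details (closure of the special form under multiplication, the ODE producing $\tilde a$, and real-analytic continuation in $h$); but the underlying strategy is the same.
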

Note that, given an element $h_+$ of the above form,
 the dressing action on a minimal immersion can be calculated using
 the classical Weierstrass data $(\mu \dd z, \nu)$, the formulae
 in Theorem \ref{mainthm} for $a$ and $Q$, and solving the
 equations at  (\ref{a0b1formulae}) for $\tilde \mu$ and $\tilde \nu$.
 If $f$ has an umbilic of order greater than $1$ at $z_0$, one expects some complications
 in this procedure.

\section{Surfaces with symmetries}  \label{symmetrysection}
In this section we consider minimal and non-minimal CMC surfaces with symmetries that are
either reflections about a plane or finite order rotations about a point contained in the surface.  Symmetries of CMC surfaces have 
 been studied by Dorfmeister and Haak in \cite{DH98} and \cite{DH2000}, and
 a more general derivation of the potentials that correspond to symmetries can be
 found in those references. 

\begin{defn}  \label{symmetrydefn}
Let $f: \Sigma \to \real^3$ be a conformal immersion of a contractible domain.
\begin{enumerate}
\item
 We say that $f$ has a 
\emph{reflective symmetry} with respect to a plane $\Pi \subset \real^3$
if there exist conformal coordinates $z$ for $\Sigma$ such that $\Sigma$ is symmetric
about the real axis in these coordinates, that is $\overline{\Sigma}=\Sigma$, and such that: 
 $$
 f(\bar z) = R_\Pi(f(z)), \quad \quad \forall z \in \Sigma,
 $$
where $R_\Pi$ is the reflection about the plane $\Pi$.

\item
 Let $n$ be a positive integer. We say that $f$ has a \emph{fixed-point rotational symmetry of order $n$ and axis $l$} if
 there exist conformal coordinates $z$ for $\Sigma$ such that 
$e^{i\theta} \Sigma = \Sigma$, where $\theta = 2\pi/n$, and such that: 
\beq  \label{rotationdef}
f(e^{i\theta}z) = R_l f(z), \quad \quad \forall z \in \Sigma,
\eeq
 where $R_l$ is the rotation of angle $\theta$ about  the line $l$.
\end{enumerate}
\end{defn}

We will first show how these symmetries are reflected in the Weierstrass data.
For the rest of this section  we always consider a contractible domain $\Sigma \subset \C$,
with base point $z_0=0$,  and a conformal CMC $H$ immersion
$f: \Sigma \to \E^3$. If $H\neq 0$, we let $\hat \eta_H = \textup{off-diag}(-\frac{H}{2}a,  p )\lambda^{-1} \dd z$ be the associated normalized potential, where $p=Q/a$.
  If $H=0$, we let $(\mu, \nu)$ be the classical Weierstrass data,
chosen such that $\mu(0)=1$ and $\nu(0)=0$.

\subsection{Reflections about a plane}
Observe that, since the components of the matrices 
$e_1$ and $e_3$ are pure imaginary, whilst
 the components of $e_2$ are real, the reflection about the plane
$e_1 \wedge e_3$ in $\real^3 = \mathfrak{su}(2)$ is given by
\bdm
X \mapsto - \bar X.
\edm
The next lemma shows that a CMC surface is symmetric about the plane
$e_1 \wedge e_3$ if and only if coordinates and basepoint can be chosen such that 
the Weierstrass data are real along the real line. 

\begin{lemma} \label{symmetrylemma1}
Suppose  $\Sigma$ is symmetric about the real axis, that is $\overline \Sigma = \Sigma$,
and let $f:\Sigma \to \E^3$ be a CMC immersion with associated data as above.
Then 
\beq   \label{symcond1}
f(\bar z) = -\overline{f(z)}, \quad \quad  \forall \,\, z \in \Sigma,
\eeq
if and only if the associated Weierstrass data satisfy the condition
\beq   \label{symcond2}
\eta_H (z) = \overline{\eta_H(\bar z)}, \quad \quad 
\forall \,\, z \in \Sigma, \quad \quad \textup{if } H\neq 0,
\eeq
\beq \label{symcond3}
(\mu (z), \nu(z)) = \overline{(\mu(\bar z), \nu(\bar z)}, \quad \quad \textup{if } H= 0.
\eeq
\end{lemma}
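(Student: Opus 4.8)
The plan is to treat the non-minimal case $H\neq0$ and the minimal case $H=0$ separately, in each case working with the \emph{mirror surface} $g(z):=-\overline{f(\bar z)}$, obtained from $f$ by composing the reflection $R_\Pi\colon X\mapsto-\bar X$ of $\E^3$ with complex conjugation in the coordinate; by definition the symmetry relation $f(\bar z)=-\overline{f(z)}$ is the statement $g=f$. The idea is to compute the Weierstrass/potential data of $g$ explicitly in terms of those of $f$, and then read off the equivalence from a uniqueness principle: for $H\neq0$, that the normalized potential of a non-minimal CMC surface is determined by the surface together with its basepoint (under the standing normalization $\hat E_0=I$); for $H=0$, directly from the Weierstrass integral. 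I expect the non-minimal case to be the main obstacle, since one must check that the involution combining $z\mapsto\bar z$ with $\hat X(\lambda)\mapsto\overline{\hat X(\bar\lambda)}$ interacts correctly with each ingredient of the DPW construction, and the sign bookkeeping ($\overline{e_3}=-e_3$, $\overline{2i}=-2i$) must be done carefully so that the two minus signs in $R_\Pi$ land in the right places.

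\textbf{The case $H\neq0$.} Let $\hat\Phi$ be the normalized meromorphic frame of $f$ with $\hat\Phi(z_0)=I$, so $\hat\eta_H=\hat\Phi^{-1}\dd\hat\Phi$, and put $\hat\Phi^\sigma(z,\lambda):=\overline{\hat\Phi(\bar z,\bar\lambda)}$ (entrywise conjugate). First I would check that $\hat\Phi^\sigma$ again takes values in $\Lambda^-_* G^\C_\sigma$ — holomorphy in $\lambda$ on $\{|\lambda|>1\}$, the normalization $\hat\Phi^\sigma(z,\infty)=\overline{\hat\Phi(\bar z,\infty)}=I$, and the $\sigma$-twisting are all preserved — and that $\hat\Phi^\sigma(z_0)=I$. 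A direct differentiation, using that $H$ is real and $\overline{\bar\lambda^{-1}}=\lambda^{-1}$, gives
\[
(\hat\Phi^\sigma)^{-1}\dd\hat\Phi^\sigma=\bbar 0 & -\frac{H}{2}\,\overline{a(\bar z)} \\ \overline{Q(\bar z)}\,/\,\overline{a(\bar z)} & 0 \ebar\lambda^{-1}\dd z,
\]
so $\hat\Phi^\sigma$ is the normalized meromorphic frame of the CMC $H$ surface whose normalized potential $\hat\eta_H^\sigma$ has meromorphic data $\big(\overline{a(\bar z)},\,\overline{Q(\bar z)}\big)$. Next, from the pointwise Iwasawa splitting $\hat\Phi=\hat F\hat B_+$ one deduces, using $\overline{SU(2)}=SU(2)$, $\overline{\Lambda_P^+ G^\C_\sigma}=\Lambda_P^+ G^\C_\sigma$ (the latter because the diagonal normalization at $\lambda=0$ is real and positive), and uniqueness of the Iwasawa factorization, that $\hat\Phi^\sigma=\overline{\hat F(\bar z,\bar\lambda)}\cdot\overline{\hat B_+(\bar z,\bar\lambda)}$ is the Iwasawa splitting of $\hat\Phi^\sigma$. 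A short computation with the Sym--Bobenko formula \eqref{symformula} — using $\overline{2i}=-2i$, $\overline{e_3}=-e_3$ and $\hat F(z_0)=I$, so that $f(z_0)=0$ — then yields
\[
\sym_H\bigl(\overline{\hat F(\bar z,\bar\lambda)}\bigr)\big|_{\lambda=1}=-\,\overline{\sym_H(\hat F)(\bar z)\big|_{\lambda=1}},
\]
which identifies the surface reconstructed from $\hat\eta_H^\sigma$ as $g=-\overline{f(\bar\cdot)}$. Hence $f(\bar z)=-\overline{f(z)}$ holds if and only if $g$ and $f$ have the same normalized potential relative to $z_0$, i.e.\ $\hat\eta_H^\sigma=\hat\eta_H$; comparing the $\lambda^{-1}\dd z$-coefficients, this is $\overline{a(\bar z)}=a(z)$ and $\overline{Q(\bar z)}=Q(z)$, which is precisely the reality condition \eqref{symcond2}.

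\textbf{The case $H=0$.} Here I would argue straight from the Weierstrass integral \eqref{wformula}, \eqref{classicalw}. Write $f=2\Re\int_{z_0}^z\Xi\,\dd w$ with $\Xi=c_1e_1+c_2e_2+c_3e_3$, $c_1=\mu(1-\nu^2)$, $c_2=-i\mu(1+\nu^2)$, $c_3=-2\mu\nu$ (all holomorphic, with $2\Re$ acting on the coefficients relative to the basis $e_1,e_2,e_3$ of $\mathfrak{su}(2)$). Substituting $w=\bar\zeta$ in the integral and using $\overline{e_1}=-e_1$, $\overline{e_2}=e_2$, $\overline{e_3}=-e_3$, one finds that $g=-\overline{f(\bar\cdot)}$ is again given by the Weierstrass integral with basepoint $z_0$, now with $c_1,c_2,c_3$ replaced by $c_1^*,-c_2^*,c_3^*$, where $\phi^*(z):=\overline{\phi(\bar z)}$. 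Since $f$ and $g$ both vanish at $z_0=0$ and all integrands are holomorphic, comparing primitives shows that $g=f$ is equivalent to $c_1^*=c_1$, $c_2^*=-c_2$, $c_3^*=c_3$. Writing $\mu^*(z):=\overline{\mu(\bar z)}$ and $\nu^*(z):=\overline{\nu(\bar z)}$, the first two equations read $\mu^*(1-(\nu^*)^2)=\mu(1-\nu^2)$ and $\mu^*(1+(\nu^*)^2)=\mu(1+\nu^2)$; adding them gives $\mu^*=\mu$, and the third then gives $\nu^*=\nu$ off the zeros of $\mu$, hence everywhere. The converse is immediate. Therefore $f(\bar z)=-\overline{f(z)}$ holds if and only if $(\mu(z),\nu(z))=\overline{(\mu(\bar z),\nu(\bar z))}$, which is condition \eqref{symcond3}.
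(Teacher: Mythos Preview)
Your proof is correct, and it differs from the paper's in a useful way. Both arguments hinge on the involution $\rho:\hat X(\lambda)\mapsto\overline{\hat X(\bar\lambda)}$, but they deploy it differently. For the forward implication with $H\neq 0$, the paper restricts to the real line, observes geometrically that the coordinate frame is $SO(2)$-valued there (so $\hat F(x,0)\in\Lambda G_{\sigma\rho}$), and then invokes Birkhoff decomposability of the $\rho$-fixed subgroup (citing \cite{branderdorf}) to conclude that $a$ and $p$ are real along $\real$, hence satisfy \eqref{symcond2} by meromorphy. You instead build the mirror surface $g$ via DPW from the conjugated potential $\hat\eta_H^\sigma$, identify it as $-\overline{f(\bar\cdot)}$ through the Sym--Bobenko computation, and then use bijectivity of the DPW correspondence (potential $\leftrightarrow$ surface, given the basepoint and the normalization $\hat E_0=I$) to get both directions at once. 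For the converse with $H\neq 0$ the two arguments essentially coincide: the paper also passes from $\hat\Phi(\bar z)=\overline{\hat\Phi(z)}$ through Iwasawa uniqueness to the Sym formula. In the minimal case, the paper again argues via the real line for the forward direction and, for the converse, appeals to the deformation of Theorem~\ref{mainthm} and continuity in $h$; your direct computation with the Weierstrass integral is more elementary and self-contained. The net effect is that your route avoids both the external Birkhoff-decomposability input and the dependence on Theorem~\ref{mainthm}, at the cost of the short sign-tracking you flagged; the paper's route gives a pleasant geometric picture of the frame along the symmetry curve.
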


\begin{proof}
First suppose that $f(\bar z) = -\overline{f(z)}$.  It follows that $f(\real \cap \Sigma) \subset \textup{span}\{e_1, e_3\}$, and so, for real values of $z$, we have $f_x$ parallel to the plane $e_1 \wedge e_3$.   It also follows from
the symmetry that the  surface is perpendicular to the plane. This means that $f_y$ must 
be parallel to $e_2$, for real $z$.
Using this fact in the definition of 
 the coordinate frame (\ref{framedef}), with initial condition $E_0 = I$,
we find that, along $\real$, the frame is $SO(2)$-valued:
\bdm
F(x,0) = \bbar \cos \theta & \sin \theta \\ - \sin \theta  & \cos \theta \ebar,
\quad \theta(x,0) \in \real.
\edm

\textbf{Case $H\neq0$:}
The above expression for $F(x,0)$ means that, for real $z$, the extended coordinate frame $\hat F$ takes values in 
$\Lambda G_{\sigma \rho}$, the fixed point subgroup with respect to the involution
$\rho$ given by $(\rho \gamma) (\lambda) := \overline{\gamma (\bar \lambda)}$, for 
$\gamma \in \Lambda G$.  The involution $\rho$ is of the first kind, which means that
the group $\Lambda G^\C_{\sigma \rho}$ is Birkhoff decomposable (see \cite{branderdorf}),
which simply means that both the factors  in
the normalized Birkhoff decomposition 
\bdm
\hat F(x,0) = \hat F_-(x,0) \hat F_+(x,0),
\edm
take values in $\Lambda G^\C_{\sigma \rho}$. Now the normalized potential is
 $\hat \eta = \hat F_-^{-1} \dd \hat F_-$, and the reality condition $\rho$,
 which is valid along the real axis,
amounts to saying that the functions $a(x,0)$ and $p(x,0)$ are real valued.
Since $\Sigma$ is connected, and the functions are meromorphic,
 this is equivalent to the condition (\ref{symcond2}). \\

Conversely, suppose that $\eta_H (z) = \overline{\eta_H(\bar z)}$ for all $z$.
It follows that
$\hat \Phi(\bar z) = \overline{\hat \Phi(z)}$ for all $z \in \Sigma$.  Let 
$\hat F$ be the unique frame obtained by the pointwise Iwasawa decomposition  
\bdm
\hat \Phi = \hat F \, \hat B_+, \quad \quad \hat F(z) \in \uu, \quad \hat B_+(z) \in \ustar. 
\edm
The corresponding decomposition for $\bar {\hat \Phi}$ is just the conjugate of this:
$\bar {\hat \Phi} = \bar {\hat F} \, \overline {\hat B_+}$.  From this, the
above symmetry of $\hat \Phi$, and the uniqueness of the Iwasawa decomposition, we conclude  that
\bdm
\hat F(\bar z) = \overline{\hat F (z)}.
\edm
Using this condition in the Sym-Bobenko formula
\bdm
 f :=  -\frac{1}{2H} \left. \left(  2 i \lambda 
\partial_\lambda \hat F \,  \hat F^{-1} \ + 
\, \hat F e_3 \hat F^{-1} - e_3 \right)\right |_{\lambda = 1},
\edm
we immediately obtain that $f(\bar z) = -\overline{f(z)}$.\\

\textbf{Case $H=0$:}  Here the coordinate frame is
\bdm
F_C = e^{-u/2} \bbar s & \bar r \\ - r & \bar s \ebar,
\edm
and it follows from the above form for $F(x,0)$ that the functions $s$ and $r$
are real valued along the real line.  Since they are holomorphic, this means
$s(z) = \overline{s(\bar z)}$ and $r(z) = \overline{r(\bar z)}$ for all $z$,
and hence $\mu = s^2$ and $\nu = sr/\mu$ have the same property.\\

Conversely, if $\mu$ and $\nu$ have the symmetry given at (\ref{symcond3}), then  the potential
$\hat \eta_h = \textup{off-diag}(-h \mu, -\nu_z)\lambda^{-1} \dd z$ has the symmetry (\ref{symcond2}).  Hence, by
the case explained above for $H\neq0$, the non-minimal CMC $h$ surface, $f_h : \Sigma \to \E^3$, associated to this potential, satisfies $f_h(\bar z) = -\overline{f_h(z)}$
for all $z$, and for every $h\neq 0$. By Theorem \ref{mainthm}, we have that this family depends continuously on $h$, and that $f_0=f$. By continuity, $f$ also
has the symmetry.

\end{proof}

\subsection{Fixed-point rotational symmetries}
In Definition \ref{symmetrydefn} for rotational symmetries, little
generality is lost by taking $l$ to be the oriented $x_3$-axis, and in our identification of $\E^3 = \mathfrak{su}(2)$ the symmetry condition
 (\ref{rotationdef}) amounts to:
\beq   \label{rotationcondition}
f(e^{i\theta}z) = \Ad_T f(z), \quad  \quad T = \bbar e^{i\theta/2} & 0 \\0 & e^{-i\theta/2} \ebar,
\quad \quad \theta = \frac{2\pi}{n}.
\eeq
This condition is reflected in the Weierstrass data for a CMC surface as follows:

\begin{lemma} \label{rotationlemma}
Suppose that the the domain $\Sigma$ is rotationally symmetric, specifically
$e^{i\theta} \Sigma = \Sigma$, where $\theta = 2\pi/n$.
Let $f:\Sigma \to \E^3$ be a conformal CMC immersion with associated data as above.
  Then
\begin{enumerate}
\item 
If $H\neq0$, then $f$ has the rotational symmetry (\ref{rotationcondition}) if and only if, for
all $z \in \Sigma$, we have
\beqas 
&&a(e^{i\theta}z) = a(z), \\
&& p(e^{i\theta}z)=e^{-2i \theta}p(z).
\eeqas

\item 
If $H=0$, then $f$ has the rotational symmetry (\ref{rotationcondition}) if and only if, for
all $z \in \Sigma$, we have
\beqas 
&&\mu(e^{i\theta}z) = \mu(z), \\
&& \nu(e^{i\theta}z)=e^{-i \theta}\nu(z).
\eeqas
\end{enumerate}
\end{lemma}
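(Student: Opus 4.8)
The plan is to run the template of Lemma~\ref{symmetrylemma1}, replacing the anti-holomorphic involution there with the rotation $\varphi(z):=e^{i\theta}z$ and the conjugation $X\mapsto-\bar X$ with $\Ad_T$. I keep the normalisations already in force before the lemma: $z_0=0$, $\hat E_0=I$ (equivalently $\hat F_C(0)=I$ for the extended coordinate frame), and, after a translation and rotation of $\E^3$, $f(0)=0$ with the axis $l$ the oriented $x_3$-axis, so that the symmetry in question is precisely (\ref{rotationcondition}). I will repeatedly use the elementary identities $\Ad_T(e_1\mp ie_2)=e^{\pm i\theta}(e_1\mp ie_2)$ and $\Ad_Te_3=e_3$, together with the fact that $T$ is diagonal and $\lambda$-independent.

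\emph{The minimal case $H=0$.} I would argue directly from the Weierstrass $1$-form. Regrouping (\ref{classicalw}) into $f_z=\mu\,(e_1-ie_2)-\mu\nu^2(e_1+ie_2)-2\mu\nu\,e_3$, differentiating (\ref{rotationcondition}) once in $z$ shows that $f$ has the rotational symmetry if and only if $e^{i\theta}f_z(\varphi(z))=\Ad_T f_z(z)$ for all $z$ (the ``if'' direction being recovered by integrating from $0$, using that $\Sigma$ is contractible and $\varphi$-invariant and that $f(0)$ lies on the axis). Since $e_1-ie_2$, $e_1+ie_2$, $e_3$ are linearly independent, comparing the coefficients of $e_1-ie_2$ and of $e_3$ in this identity yields $\mu(\varphi(z))=\mu(z)$ and $(\mu\nu)(\varphi(z))=e^{-i\theta}(\mu\nu)(z)$, hence $\nu(\varphi(z))=e^{-i\theta}\nu(z)$; conversely these two relations force $f_z$ to transform as above. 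This is item~(2). (As in Lemma~\ref{symmetrylemma1}, the ``if'' direction may instead be deduced from the non-minimal case: the data symmetry makes $\hat\eta_h$ satisfy the item-(1) relations with $a=2\mu$, $p=-\nu_z$ for \emph{every} $h$, so each $f_h$ with $h\neq0$ has the symmetry, and Theorem~\ref{mainthm} --- with $f_0=f$ and real-analytic dependence on $h$ --- transfers it to $f$.)

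\emph{The non-minimal case $H\neq0$.} The engine is an equivariance of the extended coordinate frame. If $f$ has the symmetry, then the metric function and Hopf differential satisfy $u(\varphi(z))=u(z)$ and $Q(\varphi(z))=e^{-2i\theta}Q(z)$, because $\varphi$ is realised on the surface by the orientation-preserving ambient isometry $R_l$; substituting these into (\ref{withlambda}) gives $\varphi^*\hat\alpha=\Ad_T\hat\alpha$, and since $\hat F_C(0)=I$ and $\varphi(0)=0$, uniqueness of solutions of $\dd\hat F_C=\hat F_C\hat\alpha$ forces $\hat F_C(\varphi(z))=T\,\hat F_C(z)\,T^{-1}$. (Equivalently, this follows from $f_z=e^u\Ad_{F_C}(e_1-ie_2)$ and the fact that the $SU(2)$-stabiliser of the weight vector $e_1-ie_2$ is $\{\pm I\}$, the sign being pinned by $\hat F_C(0)=I$.) Conjugation by the constant diagonal loop $T$ preserves each of $\Lambda^-_*G^\C_\sigma$, $\Lambda^+G^\C_\sigma$, $\uu$ and $\ustar$, so uniqueness of the normalised Birkhoff decomposition $\hat F_C=\hat\Phi\hat G_+$ gives $\hat\Phi(\varphi(z))=T\,\hat\Phi(z)\,T^{-1}$, whence $\varphi^*\hat\eta=\Ad_T\hat\eta$; reading off the two off-diagonal entries of $\hat\eta$ gives exactly $a(\varphi(z))=a(z)$ and $p(\varphi(z))=e^{-2i\theta}p(z)$. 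Conversely, starting from these two relations one has $\varphi^*\hat\eta=\Ad_T\hat\eta$, hence $\hat\Phi(\varphi(z))=T\,\hat\Phi(z)\,T^{-1}$ on the dense open set where $\hat\eta$ has no poles; pointwise Iwasawa $\hat\Phi=\hat F\hat B_+$ then yields $\hat F(\varphi(z))=T\,\hat F(z)\,T^{-1}$, and since $T$ commutes with $e_3$ and is $\lambda$-independent the Sym--Bobenko formula obeys $\sym_H(T\hat FT^{-1})=\Ad_T\sym_H(\hat F)$, giving $f(\varphi(z))=\Ad_T f(z)$ on that set and, by continuity of $f$, everywhere. This is item~(1).

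\emph{Main difficulty.} The only step needing genuine care is the passage from the symmetry of $f$ to the frame equivariance $\hat F_C(\varphi(z))=T\hat F_C(z)T^{-1}$: one must be certain the gauge freedom in the frame (right multiplication by a $K$-valued map) has been removed, which is precisely where the normalisations $z_0=0$, $\hat E_0=I$ and the alignment of $l$ with $e_3$ enter, together with ODE uniqueness. Everything downstream is formal and mirrors the corresponding parts of Lemma~\ref{symmetrylemma1}; the one remaining routine point is the density-plus-continuity argument used to pass from the preimage of the big cell (where $\hat\eta$ is holomorphic) to all of $\Sigma$.
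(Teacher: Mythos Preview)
Your proof is correct and follows essentially the same route as the paper: for $H\neq 0$ you establish the frame equivariance $\hat F_C(e^{i\theta}z)=T\hat F_C(z)T^{-1}$ and then pass through the normalized Birkhoff decomposition to the potential, exactly as the paper does. The only minor difference is in the $H=0$ forward direction, where you read off the conditions on $\mu,\nu$ directly from the Weierstrass $1$-form $f_z$ by coefficient comparison, whereas the paper goes through the explicit coordinate frame $F_C=e^{-u/2}\begin{pmatrix} s & \bar r\\ -r & \bar s\end{pmatrix}$ and the relations $\mu=s^2$, $\mu\nu=sr$; your route is marginally more direct but not substantively different.
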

\begin{proof}  
Given the symmetry condition (\ref{rotationcondition}), set $g(z) = f(e^{i\theta}z)=\Ad_T f(z)$. Calculating the coordinate frame for $g$, 
one obtains that  the coordinate frame for $f$, 
defined by (\ref{framedef}),  satisfies 
\beq \label{framesymmetry}
F_C(e^{i\theta}z) = \Ad_T F_C(z),
\eeq
 for all $z$, and this condition is also valid for the extended coordinate frame $\hat F_C$.
   Conversely, given a CMC $H$ surface with extended frame $\hat F_C$, the Sym-Bobenko formula shows that the symmetry (\ref{framesymmetry}) implies that $f$ has the symmetry
 (\ref{rotationcondition}). This argument is also valid for the case $H =0$, taking the limit $H\to 0$ in the Sym-Bobenko formula.  Thus, in either case, the condition  
  (\ref{rotationcondition}) is equivalent with the same condition for $\hat F_C$.\\

\textbf{Case $H\neq0$:} 
The meromorphic extended frame is obtained by the normalized Birkhoff decomposition
$\hat F_C(z) = \hat \Phi(z) \hat G_+(z)$, and the condition (\ref{framesymmetry}) gives us
\beqas
\hat F_C(e^{i \theta}z) &=& \hat \Phi(e^{i \theta} z) \,  \hat G_+(e^{i \theta} z) \\
 &=& T \hat \Phi (z)  \, \hat G_+(z) T^{-1} \\
 &=& \Ad_T \hat \Phi(z) \Ad_T G_+(z).
\eeqas
The uniqueness of the $\Lambda^-_* G_\sigma^\C$ factor in the  normalized Birkhoff decomposition implies that
\bdm
\hat \Phi(e^{i \theta} z) = \Ad_T \hat \Phi(z).
\edm
  Differentiating, one obtains
\beq  \label{potentialsymmetry}
\hat \Phi^{-1} \hat \Phi_z \big|_{e^{i\theta}z} = e^{-i\theta} \Ad_T \left(\hat \Phi^{-1} \hat \Phi_z \right)\big|_z,
\eeq
which is to say 
\bdm
\bbar 0& -\frac{H}{2} \,a(e^{i\theta}z) \\ p(e^{i\theta}z) & 0 \ebar \lambda^{-1} \dd z =
\bbar 0& -\frac{H}{2} \,a(z) \\ e^{-2i \theta} \, p(z) & 0 \ebar \lambda^{-1} \dd z.
\edm
Thus $a$ and $p$ satisfy the conditions stated in the theorem.
Conversely, integrating a potential $\hat \eta$ with the symmetry given at (\ref{potentialsymmetry}),  all steps of the above argument can be reversed to conclude that the 
corresponding CMC surface $f$ has the required symmetry.\\

\textbf{Case $H=0$:} For the minimal case, the coordinate frame has the form
\bdm
F_C = e^{-u/2} \bbar s & \bar r \\ - r & \bar s \ebar,
\edm
where $\mu = s^2$ and $\mu \nu = s r$.   If the surface has the symmetry (\ref{rotationcondition}) then the metric is invariant under $z \mapsto e^{i\theta}z$, and 
hence $u(e^{i\theta}z) = u(z)$.  Thus the symmetry (\ref{framesymmetry}) of $F_C$ reduces to:  $s(e^{i\theta}z) = s(z)$ and $r(e^{i\theta}z) = e^{-i \theta}r(z)$. This implies the stated conditions for $\mu$ and $\nu$.  \\

 Conversely, given a 
minimal surface with Weierstrass data satisfying the given symmetry conditions, the associated non-minimal surface, with basepoint $z_0=0$, given in Theorem \ref{mainthm},
has normalized potential 
$\hat \eta_h = \textup{off-diag}(-h a/2, p) = \textup{off-diag}(-h \mu, -\nu_z)$. 
The symmetry assumptions on $\mu$ and $\nu$ 
give $a(e^{i \theta}z) = a(z)$ and $p(e^{i\theta}z)=e^{-2i\theta}p(z)$.
Hence this is the potential for a CMC $h$ surface satisfying the symmetry (\ref{rotationcondition}). By continuity, the symmetry also holds at $h=0$.
\end{proof}

\begin{figure}[ht]
\centering
$
\begin{array}{cc}
\includegraphics[height=40mm]{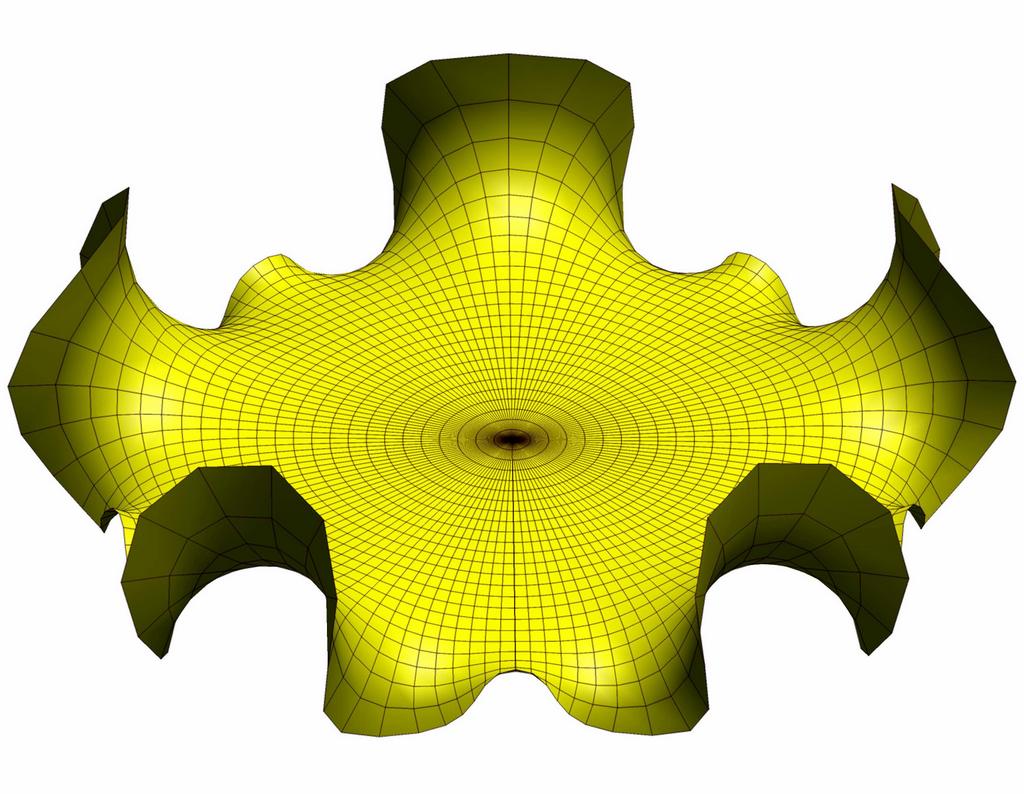} \quad & \quad
\includegraphics[height=40mm]{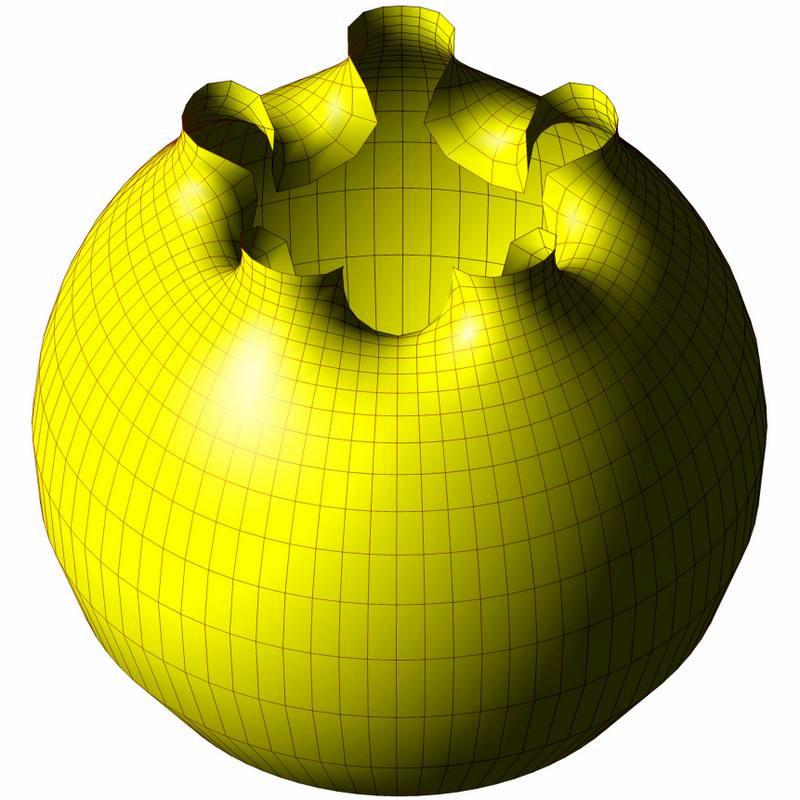}  
 \vspace{2ex} \\
 H=10^{-8}, & 
 H=2   
\end{array}
$
\caption{Two surfaces in the same family, which has an order $5$ rotational symmetry. } 
\label{figure5sym}
\end{figure}

\subsection{Preservation of symmetries under deformations}
A Euclidean motion of the ambient space will always bring a line $l$ to the $x_3$-axis, or a plane $\Pi$ to the plane $e_1\wedge e_3$. Furthermore, the symmetry conditions of the Weierstrass data are independent of $h$: in particular, for both symmetries, the Weierstrass data for a minimal surface satisfy the symmetry condition if and only if the generalized Weierstrass data for the associated non-minimal surfaces do also.  Hence the results of the
previous two lemmas have the following corollary:
\begin{theorem}  \label{symmetrythm}
Let $H$ be any real number.
Let $f_H: \Sigma \to \E^3$ be a conformal CMC $H$ immersion, and let $f_h$, for 
$h \in \real$, be the family associated by Theorem \ref{mainthm}, with basepoint $z_0=0$.
Then $f_H$ satisfies one of the symmetries at Definition \ref{symmetrydefn} if and only if $f_h$ satisfies the same symmetry for all $h$. 
\end{theorem}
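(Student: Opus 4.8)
The plan is to deduce the theorem from the two preceding lemmas, after observing that the symmetry conditions they extract from the (generalized) Weierstrass data do not involve the mean curvature at all.

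First I would put the symmetry into standard position. A Euclidean motion of $\E^3$ carries the plane $\Pi$ onto $e_1\wedge e_3$, respectively the axis $l$ onto the oriented $x_3$-axis; because a change of the initial condition $\hat E_0$ in the DPW construction is exactly such a motion (Subsection \ref{dpwmethodsect}), and Theorem \ref{mainthm} produces the whole family $f_h$ from the normalized potential with $\hat E_0 = I$, this normalization moves all members of the family at once and changes nothing. Then I would fix conformal coordinates on $\Sigma$ as in Definition \ref{symmetrydefn}, so that $\overline{\Sigma} = \Sigma$ in the reflective case, or $e^{i\theta}\Sigma = \Sigma$ with $\theta = 2\pi/n$ in the rotational case. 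In these coordinates the basepoint $z_0 = 0$ is the fixed point of the coordinate involution, resp.\ rotation --- this is precisely the requirement that the basepoint be ``chosen appropriately'' --- and evaluating the symmetry identity at $z_0$ gives $f(z_0) \in \Pi$, resp.\ $f(z_0) \in l$, so the hypotheses of Lemma \ref{symmetrylemma1}, resp.\ Lemma \ref{rotationlemma}, are met with this basepoint.

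The key point is then immediate. Suppose first $H \neq 0$. By Lemma \ref{symmetrylemma1}, resp.\ Lemma \ref{rotationlemma}, $f_H$ has the reflective, resp.\ rotational, symmetry if and only if the data $(a, p)$, $p = Q/a$, of the normalized potential $\hat\eta_H$ satisfy $a(z) = \overline{a(\bar z)}$ and $p(z) = \overline{p(\bar z)}$, resp.\ $a(e^{i\theta}z) = a(z)$ and $p(e^{i\theta}z) = e^{-2i\theta}p(z)$. By Theorem \ref{mainthm} the normalized potential of $f_h$ is obtained from that of $f_H$ by replacing $H$ with $h$ in the upper-right entry, so its data are $(a,p)$ for the \emph{same} meromorphic functions $a$ and $p$, with no dependence on $h$. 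Hence $\hat\eta_h$ satisfies exactly the same symmetry condition for every $h$. If instead $H = 0$, the relevant lemma phrases the condition in terms of the classical data $(\mu, \nu)$, and by item (1) of Theorem \ref{mainthm} the associated non-minimal potentials have $a = 2\mu$ and $p = -\nu_z$; a direct comparison of the displayed symmetry conditions shows that the condition on $(\mu,\nu)$ is equivalent to the one on $(a,p)$, so again it passes unchanged to $\hat\eta_h$ for all $h$.

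To finish I would invoke the converse halves of the two lemmas together with continuity in $h$. If $f_H$ (with $H$ zero or not) has the symmetry, then, by the previous paragraph, for each $h \neq 0$ the potential $\hat\eta_h$ satisfies the symmetry condition, so the converse direction of Lemma \ref{symmetrylemma1}, resp.\ Lemma \ref{rotationlemma}, gives the symmetry for $f_h$. Since $h \mapsto f_h$ is real analytic, in particular continuous, at $h = 0$ by Theorem \ref{mainthm} (via Theorem \ref{thm1}), letting $h \to 0$ in $f_h(\bar z) = -\overline{f_h(z)}$, resp.\ $f_h(e^{i\theta}z) = \Ad_T f_h(z)$, gives the symmetry for $f_0$ as well, on the domain where the family is defined. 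Conversely, if $f_h$ has the symmetry for all $h$, it holds in particular for $h = H$. The only point needing real care --- and the main, rather mild, obstacle --- is the bookkeeping around the basepoint (that the symmetry-adapted coordinates of Definition \ref{symmetrydefn} can be taken with origin at $z_0$, so that the lemmas apply) and the verification that the limiting identity at $h = 0$ genuinely transfers, which is exactly where the real-analytic dependence on $h$ from Theorem \ref{mainthm} is used. All the genuine analytic work is already contained in the two lemmas, so the theorem is in effect a corollary.
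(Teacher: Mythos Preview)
Your proposal is correct and follows essentially the same approach as the paper: normalize the symmetry by a Euclidean motion, observe that the symmetry conditions on the Weierstrass data extracted by Lemmas \ref{symmetrylemma1} and \ref{rotationlemma} are independent of $h$, and note that the minimal-surface conditions on $(\mu,\nu)$ are equivalent to the non-minimal conditions on $(a,p)$. The paper's own argument is stated in two sentences as a direct corollary of the lemmas; your version simply unpacks more of the bookkeeping (basepoint placement, the passage to $h=0$ by continuity), but the continuity step is already built into the converse halves of the lemmas themselves, so you could have invoked those directly rather than re-deriving it.
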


\subsection{Examples with rotational symmetries}
The conditions on $a$ and $p$ in Lemma \ref{rotationlemma} are equivalent to the following
Laurent expansions for $a(z)$ and $p(z)$:
\bdm
a(z)= \sum_j a_{nj} \,  z^{nj}, \quad \quad
p(z) = \sum_j p_{nj-2}  \, z^{nj-2}.
\edm
\begin{example}
As an example with an order $5$ rotational symmetry, we numerically
 computed  solutions corresponding to the potential
\bdm
\hat \eta = \bbar 0 & -\frac{h}{2}(5.1+ 1.5 \,z^5 + 0.35 \,z^{10}) \\ 1.25 \,z^3 + 4.15 \,z^8 & 0 \ebar \lambda^{-1} \dd z.
\edm
The images of discs around the coordinate origin for the
surfaces corresponding to $h=10^{-8}$ and $h=2$ are shown at Figure \ref{figure5sym}.
\end{example}

\begin{example}
Kusner's surface with  $p=3$, defined in  \cite{kusner1987}, has Weierstrass data 
\bdm
\mu = \frac{i (\sqrt{5} z^3 + 1)^2}{(z^6+\sqrt{5}z^3-1)^2}, \quad \quad
\nu = \frac{z^2 (z^3-\sqrt{5})}{\sqrt{5}z^3+1}.
\edm
It is proved in \cite{kusner1987} that this minimal surface is complete, non-orientable, has finite total curvature $-10\pi$, has $3$ embedded flat ends, and contains $3$ straight lines which lie in a plane. The dihedral group of order 6 acts by reflections around these lines.\\

The corresponding potential, with basepoint $z_0 = 0$, is 
\bdm
\hat \eta = \bbar  0 &  -h\frac{(\sqrt{5}z^3+1)^2}{(z^6+\sqrt{5}z^3-1)^2}\\
                      -\frac{2\sqrt{5}iz(z^6+\sqrt{5}z^3-1)}{(z^3+1)^2} & 0 \ebar \lambda^{-1} \dd z.                  
\edm
\end{example}
The cases $H=10^{-9}$ and $H=1$ are shown in Figure \ref{figurekusner}.

\begin{figure}[ht]
\centering
$
\begin{array}{ccc}\includegraphics[height=27mm]{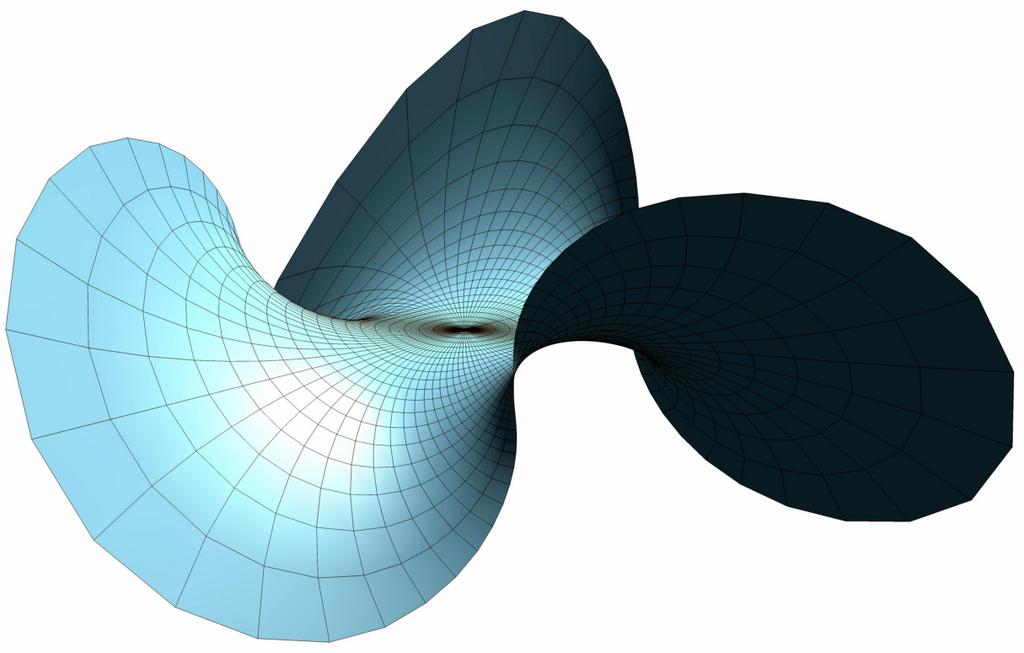} \quad & 
\includegraphics[height=27mm]{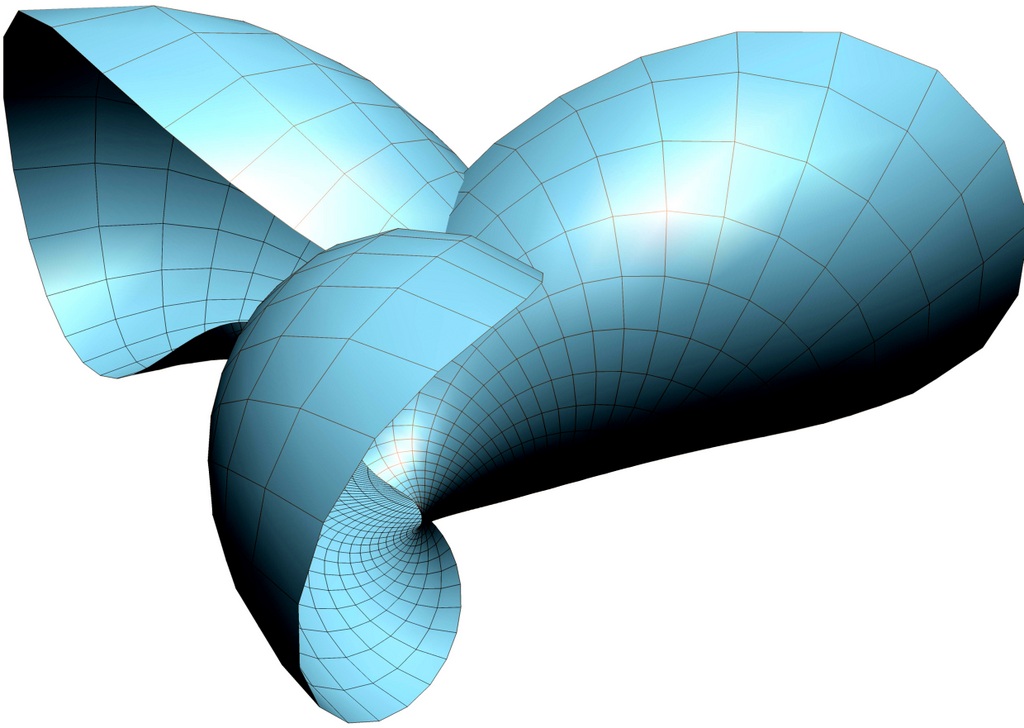} \quad & 
\includegraphics[height=27mm]{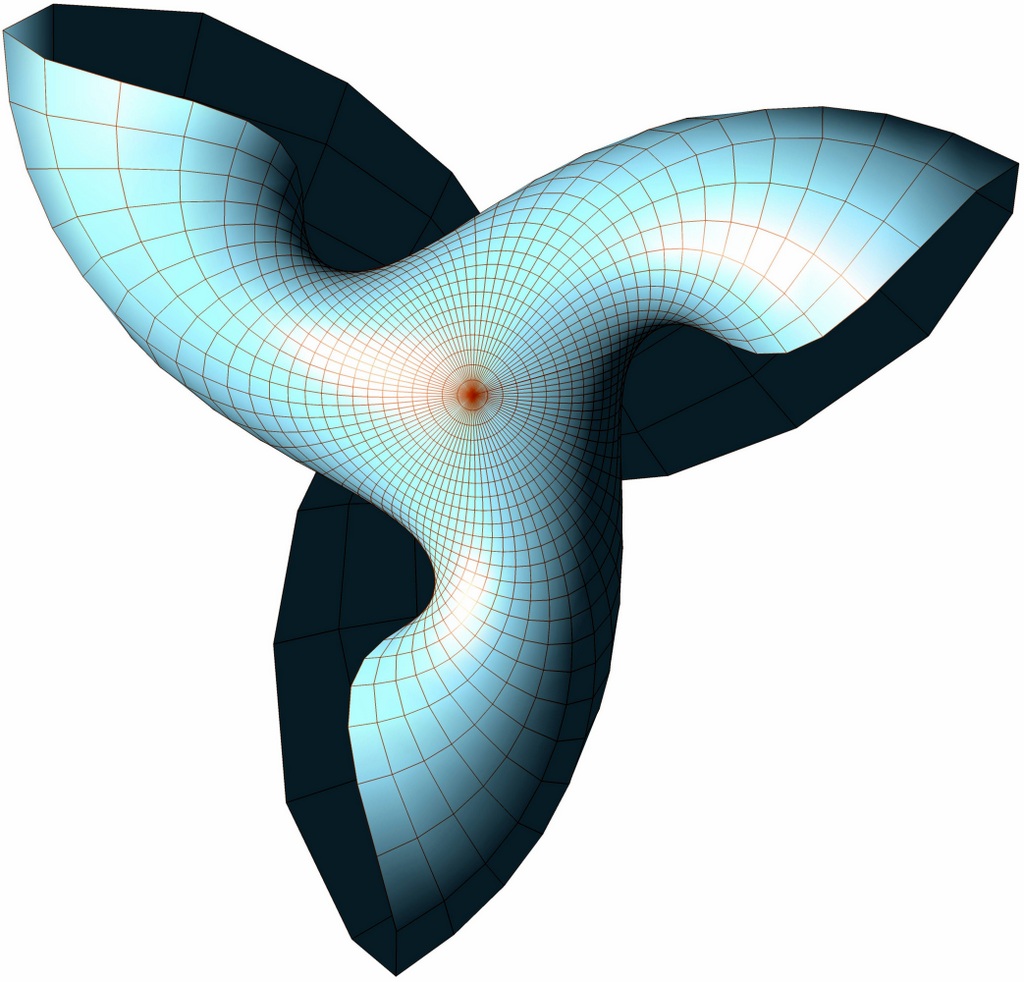} 
 
 \vspace{2ex} \\
 H=10^{-9} &
 H=1 & 
 H=1  
\end{array}
$
\caption{Left: A disc around the symmetry point in an almost minimal version of Kusner's surface with $p=3$. 
Right two images: The CMC $1$ version. } 
\label{figurekusner}
\end{figure}

\providecommand{\bysame}{\leavevmode\hbox to3em{\hrulefill}\thinspace}
\providecommand{\MR}{\relax\ifhmode\unskip\space\fi MR }
\providecommand{\MRhref}[2]{%
  \href{http://www.ams.org/mathscinet-getitem?mr=#1}{#2}
}
\providecommand{\href}[2]{#2}

\end{document}